\DeclareFontFamily{T1}{calligra}{}
\DeclareFontShape{T1}{calligra}{m}{n}{<->s*[1.44]callig15}{}
\newtheorem{thm}{Theorem}[section]
\newtheorem{example}{Example}
\newtheorem{rmk}{Remark}
\newtheorem{lem}[thm]{Lemma}
\newtheorem{prop}[thm]{Proposition}
\newtheorem*{defn}{Definition}
\begin{document}

                                        \title{ \textbf{Modules for Loop Affine-Virasoro Algebras}}

                                        \author{\small{S.Eswara Rao}}
                                        \date{ \small {School of Mathematics, Tata Institute of Fundamental Research, Mumbai, India. \\
                                        senapati@math.tifr.res.in, sena98672@gmail.com }}

                                             \maketitle

\doublespacing
          \begin{abstract}
In this paper we study the representations of loop Affine-Virasoro algebras.  As they have canonical triangular decomposition,we define Verma modules and its irreducible quotients. We give necessary and sufficient condition for an irreducible highest weight module to have finite dimensional weight spaces. We prove that an irreducible integrable module is either an highest weight module or a lowest weight module whenever the canonical cental element acts non-trivially. At the end we construct Affine central operators for each integer and they commute with the action of the Affine Lie algebra.

         \end{abstract}

$~~~~~~~~$\\
$~~~~~~~~$ \textbf{Key words} :Affine-Virasoro algebra, Integrable modules, Affine Central operators.\\
$~~~~~~~~$ \textbf{MSC}: 17B68, 17B67
\doublespacing
\section{ Introduction}
\noindent In recent times the loop algebras are gaining importance and several authors have studied the representations of loop algebras of a simple finite dimensional Lie Algebra. See [1, 15] and references therein. In the theory of Toroidal Lie algebras, the study of irreducible modules with finite dimensional weight spaces has been reduced to the study of modules for the loop affine Lie algebras [4] and [6]. See [16] for the loop Virasoro algebras. On the other hand the semidirect product of Virasoro algebra and affine Lie algebra is very important and occurs in physics literature [12, 13]. They have been also studied from a mathematical point of view [8, 9, 10]. In fact Virasoro Algebra acts on any highest weight module  of an affine Lie algebra except at critical level. See Chapter 12 of [11], and [14]. We do not have this at loop level. So in this paper we consider the loop algebra of the semidirect product of Virasoro and affine Lie algebra.\\
\indent Let $\mathfrak{g}$ be a simple finite dimensional Lie algebra and $\tilde{\mathfrak{g}}$ be the corresponding affine Lie algebra (without the degree derivation).  Let Vir be the Virasoro algebra and let $\tau$ be the semidirect product of $Vir\ltimes \tilde{\mathfrak{g}}$. Now let A be associative commutative finitely generated algebra with unit. We consider the loop algebra $\tau \otimes A : = \tau(A)$ in this paper. The classical loop algebra is when A is a Laurent polynomial in one variale. When A is Laurent polynomial in several variables it is called multi loop algebra and they occur naturally in Toroidal Lie algebras [4].\\
\indent In this paper we consider general A in tune with the recent literature. We now explain the contents of the paper. There is a natural triangular decomposition of $\tau(A)$ induced  by the triangular decomposition of affine Lie algebra and Virasoro algebra. See (1.8). Using this decomposition we define Verma modules and irreducible highest weight modules. The irreducible highest weight modules need not have finite dimensional weight spaces. In Proposition(1.1) we give a necessary and sufficient condition for an irreducible highest weight module to have finite dimensional weight spaces.\\
\indent In Section 2 we define integrable modules and prove that any irreducible integrable module with finite dimensional weight spaces is an highest weight module or a lowest weight module when ever the canonical central element acts non-trivially (Proposition 2.1). In Theorem(2.2) and (2.3) we give a necessary and sufficient condition for a highest weight module to be integrable.\\
\indent Section 3, which is more challenging, we define a category $\mathcal{O}$ (see 3.1 for definition)  of $\tau(A)$ modules which includes Verma modules and its subquotients.  We define Affine central operators which acts on objects of category $\mathcal{O}$  and commutes with the action of $\tilde{\mathfrak{g}}$. These operators are very useful as they will allows us to understand the decomposition of $\tau({A})$ modules with respect to $\tilde{\mathfrak{g}}$.  We define the operators $T_r(a,b)$ for any $r \in \mathbb{Z}, a,b \in A$ and they are motivated by Sugawara operators but they are not Sugawara operators (see chapter 12.8 of [11] for the definition of Sugawara operators).  See [5] where $T_0(a,b)$ is defined and more examples are worked out to show its effectiveness in decomposing a tensor product module.\\
Throughout the paper all vector spaces and tensor products are over complex numbers $\mathbb{C}$. $\mathbb{Z}$ denotes the set of integers.\\
(1.1) Let $\mathfrak{g}$ be any simple finite dimensional Lie Algebra.  Let $\tilde{\mathfrak{g}}=\mathfrak{g} \otimes \mathbb{C}[t,t^{-1}] \oplus \mathbb{C} K $ be the corresponding affine Lie Algebra with Lie bracket $[X \otimes t^n, Y \otimes t^m]=[X,Y]\otimes t^{m+n}+m \delta_{m+n,0}(X,Y)K$ where $X,Y \in \mathfrak{g}, m,n \in \mathbb{Z}$ and ( , ) be a non-degenerate symmetric bilinear form on $\mathfrak{g}$.\\
(1.2) Let $Vir=\displaystyle\bigoplus_{n \in \mathbb{Z}} \mathbb{C}L_n \bigoplus \mathbb{C} C_0$ be the Virasoro Algebra with the bracket $[L_n,L_m]=(m-n)L_{n+m}+\delta_{n+m,0}\frac{n^3-n}{12} C_0$.\\
(1.3)Then consider the Lie Algebra $\bar{\tau}=Vir \rtimes {\tilde{\mathfrak{g}}}$ with  Lie bracket $[L_n,X \otimes t^m]= mX \otimes t^{m+n}$. Let $\tau = \bar{\tau}/ \mathbb{C}(K - C_0)$ where $K$ is center of $\tilde{\mathfrak{g}}$ and $C_0$ is center of Vir.\\
(1.4) We fix a commutative associative finitely generator algebra A with unit.  For any Lie Algebra  $\mathfrak{g'}$, let $\mathfrak{g'}(A)=\mathfrak{g'}\otimes A$ be a Lie algebra with obvious Lie bracket. Denote $X(a)=X \otimes a, X \in \mathfrak{g'}, a \in A$.  $U(\mathfrak{g'})$ always denote the universal enveloping algebra.\\
(1.5)  In this paper we study the Lie Algebra $\tau(A)$ and classify irreducible integrable modules for $\tau(A)$ with finite dimensional weight spaces and center K acting non-trivially.\\
(1.6) For any ideal $I$ of $A$ define the radical ideal $\sqrt{I}=\{a \in A\, | \,a^n \in I, \rm for~ some~ n>0\}$.  It is standard fact that $\sqrt{I}=\displaystyle \bigcap_{i=1}^{k} M_i$, where $M_i$ are distinct maximal ideals of $A$.  By Chineese remainder theorem it follows that $A/\sqrt{I} \cong\bigoplus \mathbb{C}$ (k-copies).\\
(1.7) In particular, for any Lie Algebra $\mathfrak{g'}$ we have a surjective map $\mathfrak{g'} \otimes A \mapsto \bigoplus \mathfrak{g'}$ (k-copies) and the kernel is $\mathfrak{g'} \otimes \sqrt{I}$.\\
(1.8) We now define a triangular decomposition for $\tau(A)$.  Let $\mathfrak{h}$ be Cartan subalgebra of $\mathfrak{g}$  and let $\tilde{\mathfrak{h}}={\mathfrak{h}}\bigoplus\mathbb{C}K$ and $\bar{\mathfrak{h}} = \tilde{\mathfrak{h}} \oplus \mathbb{C} L_0$.  Let $N^{-}\bigoplus\tilde{\mathfrak{h}}\bigoplus N^{+}$ be the standard decomposition of $\tilde{\mathfrak{g}}$.\\
Let $L^{-}=\displaystyle\bigoplus_{n < 0} \mathbb{C}L_n, L^{+}=\displaystyle\bigoplus_{n >0} \mathbb{C}L_n$, $L^{0}=\mathbb{C}L_0 \bigoplus \mathbb{C}K$
 and $Vir=L^{-} \bigoplus L^{0} \bigoplus L^{+}$.  Now consider the triangular decomposition of $\tau(A)$ as $\tau^{-}(A)\bigoplus \tau^{0}(A)\bigoplus \tau^{+}(A)$ where $\tau^{-}(A)=N^{-}(A)\bigoplus L^{-}(A)$\\
$\tau^{+}(A)=N^{+}(A)\bigoplus L^{+}(A)$\\
$\bar{\mathfrak{h}}(A)=\tilde{\mathfrak{h}}(A)\bigoplus \mathbb{C}L_{0}(A) = \tau^{0}(A)$\\
(1.9) Let $\psi:\tau^{0}(A)\rightarrow \mathbb{C}$ be a linear map and consider the Verma module $M(\psi)=\tau(A){\substack{\bigotimes \\\tau^{0}(A)\oplus \tau^{+}(A)} }\mathbb{C}v$ where $\mathbb{C}v$ is a one dimensional representation of $\tau^{+}(A)\oplus \tau^{0}(A)$  and $\tau^{+}(A)$  acts trivially and $\tau^{0}(A)$ acts via $\psi$.  By standard arguments it follows that $M(\psi)$ has a unique irreducible quotient say $V(\psi)$.  Clearly $M(\psi)$ is weight module with respect to $\bar{\mathfrak{h}}=\tilde{\mathfrak{h}}\bigoplus \mathbb{C}L_0$.  $M(\psi)$ does not have finite dimensional weight spaces whenever $A$ is infinite dimensional.  $V(\psi)$ may have finite dimensional weight spaces depending on $\psi$.  We will now investigate when $V(\psi)$ has finite dimensional weight spaces.\\
\begin{prop} \rm
$V(\psi)$ has finite dimensional weight spaces with respect to $\bar{\mathfrak{h}}$ if and only if there exists a co-finite Ideal $I$ of $A$ such that $\psi(\bar{\mathfrak{h}} \otimes I)=0$.  In that case $\tau(I).V(\psi)=0$ 
\end{prop}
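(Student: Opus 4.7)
The plan is to prove the two implications separately, leveraging the triangular decomposition of $\tau(A)$ and the fact that $\tau(I) = \tau \otimes I$ is an ideal of $\tau(A)$ whenever $I$ is an ideal of $A$.

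For sufficiency, given a cofinite ideal $I \subseteq A$ with $\psi(\bar{\mathfrak{h}} \otimes I) = 0$, I would build a highest weight module on which $\tau(I)$ manifestly acts by zero. First verify that
$$\mathfrak{p} := \tau^+(A) + \tau^0(A) + \tau^-(I)$$
is a Lie subalgebra of $\tau(A)$: since $I$ is an ideal, the only potentially escaping brackets $[\tau^-(I),\tau^{\pm}(A)]$ and $[\tau^-(I),\tau^-(I)]$ land in $\tau(I)\subseteq \mathfrak{p}$, with the central contribution $K\otimes I$ sitting inside $\tau^0(I)\subseteq \tau^0(A)$. Extend $\psi$ to a one-dimensional representation of $\mathfrak{p}$ by letting $\tau^+(A)$ and $\tau^-(I)$ act trivially; consistency on brackets reduces exactly to the hypothesis $\psi(\bar{\mathfrak{h}}\otimes I) = 0$. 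Inducing produces $\tilde{M} := U(\tau(A)) \otimes_{U(\mathfrak{p})} \mathbb{C}v$, and by PBW $\tilde{M} \cong U(\tau^- \otimes (A/I)) \otimes \mathbb{C}v$ as vector spaces; since $A/I$ is finite-dimensional, every weight space of $\tilde{M}$ is finite-dimensional. Because $\tilde{M}$ is a nonzero highest weight module of highest weight $\psi$, it has $V(\psi)$ as a quotient, which therefore inherits finite-dimensional weight spaces. Finally, $\tau(I)$ annihilates the generator of $\tilde{M}$, and being an ideal of $\tau(A)$, the subspace of vectors annihilated by $\tau(I)$ is a $\tau(A)$-submodule containing the generator, forcing $\tau(I)\tilde{M} = 0$ and hence $\tau(I)V(\psi) = 0$.

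For necessity, assume $V(\psi)$ has finite-dimensional weight spaces and let $v$ denote the highest weight vector. For each simple affine root $\alpha_i$ ($0 \leq i \leq l$), define
$$I_{\alpha_i} := \{a \in A : (f_{\alpha_i} \otimes a)v = 0\};$$
cofiniteness is immediate because $a \mapsto (f_{\alpha_i} \otimes a)v$ factors through the finite-dimensional weight space $V(\psi)_{\psi-\alpha_i}$. It is an \emph{ideal}: choosing $h \in \bar{\mathfrak{h}}$ with $\alpha_i(h) \neq 0$ and expanding $(h\otimes b)(f_{\alpha_i}\otimes a)v$ through the commutator $[h\otimes b,\, f_{\alpha_i}\otimes a] = -\alpha_i(h)\, f_{\alpha_i}\otimes ab$ forces $ab \in I_{\alpha_i}$ whenever $a \in I_{\alpha_i}$. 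With $I' := \bigcap_i I_{\alpha_i}$ (a cofinite ideal), the $\mathfrak{sl}_2$-relation $[e_{\alpha_i}\otimes b,\, f_{\alpha_i}\otimes a] = h_{\alpha_i}\otimes ab$ applied to $v$ gives $\psi(h_{\alpha_i}\otimes ab) = 0$ for $a \in I',\, b \in A$; since $\{h_{\alpha_i}\}_{i=0}^l$ spans $\tilde{\mathfrak{h}}$ and $I'\cdot A = I'$, one concludes $\psi(\tilde{\mathfrak{h}}\otimes I') = 0$. For the $L_0$-coordinate, fix $n \geq 2$ and set $J_n := \{a : (L_{-n}\otimes a)v = 0\}$, again a cofinite ideal (now via $[L_0\otimes b, L_{-n}\otimes a] = -n L_{-n}\otimes ab$). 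Put $I := I' \cap J_n$. Applying $L_n \otimes b$ to $(L_{-n}\otimes a)v$ for $a \in I$ and invoking the Virasoro bracket $[L_n, L_{-n}] = -2n L_0 + \tfrac{n^3-n}{12}K$ yields
$$-2n\,\psi(L_0\otimes ab) + \tfrac{n^3-n}{12}\,\psi(K\otimes ab) = 0,$$
and since $ab \in I \subseteq I'$ kills the second term (as $K \in \tilde{\mathfrak{h}}$), one obtains $\psi(L_0\otimes I) = 0$, completing $\psi(\bar{\mathfrak{h}}\otimes I) = 0$.

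The main obstacle is isolating the $L_0$-coordinate of $\psi$: the $\tilde{\mathfrak{g}}$-Chevalley brackets alone only constrain $\tilde{\mathfrak{h}} = \mathfrak{h}\oplus \mathbb{C}K$, so $L_0$ can only be reached through a Virasoro commutator $[L_n, L_{-n}]$, which unavoidably drags in the central anomaly $\tfrac{n^3-n}{12}K$. The argument must therefore be sequenced so that $\psi(K\otimes\cdot)$ is already pinned down on a cofinite ideal (via the affine step) before the Virasoro relation is applied, and $n \geq 2$ must be chosen so that the $L_0$-coefficient $-2n$ is nonzero. A parallel subtlety on the sufficiency side is that the central term $K\otimes I$ produced inside $[\tau^-(I),\tau^+(A)]$ forces $\mathfrak{p}$ to include the full $\tau^0(A)$, not merely $\tau^0(I)$, and the consistency of the induced one-dimensional $\mathfrak{p}$-character on such brackets rests precisely on the hypothesis $\psi(\bar{\mathfrak{h}}\otimes I) = 0$.
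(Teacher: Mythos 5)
Your proof is correct, and on the sufficiency direction it takes a genuinely different route from the paper's. The paper works entirely inside $V(\psi)$: starting from $\bar{\mathfrak{h}}(I)v=0$, it shows by induction on the height of $\alpha$ that each $X_{-\alpha}(I)v$ is a highest weight vector for $\tau(A)$ and hence zero in the irreducible module (similarly $\mathrm{Vir}(I)v=0$), then notes that the annihilator of $\tau(I)$ is a submodule containing $v$, so $\tau(I)V(\psi)=0$ and $V(\psi)$ becomes a $\tau(A/I)$-module with finite-dimensional weight spaces. You instead induce from the parabolic $\mathfrak{p}=\tau^+(A)+\tau^0(A)+\tau^-(I)$ and read off finite-dimensionality of the weight spaces of the resulting generalized Verma module from PBW, then pass to its irreducible quotient $V(\psi)$. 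Your route avoids the height induction and the appeal to Proposition 2.4 of [6], and it isolates exactly where the hypothesis $\psi(\bar{\mathfrak{h}}\otimes I)=0$ enters, namely in the consistency of the one-dimensional $\mathfrak{p}$-character on the brackets $[\tau^-(I),\tau^+(A)]$; the paper's route has the merit of producing $\tau(I)V(\psi)=0$ directly inside $V(\psi)$ without an auxiliary module, though your annihilator argument recovers this just as well. On the necessity direction the two arguments are essentially parallel: the paper outsources the affine part to Lemma 2.3 of [6] and uses the anomaly-free bracket $[L_1,L_{-1}]=-2L_0$, whereas you carry out the affine step explicitly with the simple coroots and use $[L_n,L_{-n}]$ for $n\geq 2$, handling the central term $\tfrac{n^3-n}{12}K$ by first pinning down $\psi(K\otimes\cdot)$. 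One small remark: your insistence on $n\geq 2$ is unnecessary, since $-2n\neq 0$ already for $n=1$ and that choice makes the anomaly term vanish identically, which is precisely what the paper does.
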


\begin{proof}
Suppose $V(\psi)$ has finite dimensional weight spaces.  As in the proof of Lemma 2.3 of [6] there exists a co-finite ideal of $I_1$ of $A$ such that $\tilde{\mathfrak{h}}(I_1)v=0$.\\
Now consider $I_2=\{a \in A \,\,  |\,\, L_{-1}(a)v=0$\} and as in [6] it is easy to prove $I_2$ is a co-finite idel of $A$.\\
Consider $L_1L_{-1}(I_2)v=L_{-1}(I_2)L_1v - 2L_0(I_2)v$. Thus we have $L_0(I_2) v = 0$. Let $I=I_1I_2$ which is also co-finite ideal of $A$ by Lemma 2.2 of [6].  We have $\bar{\mathfrak{h}}(I)v=0$.
Now suppose there exists a co-finite ideal $I$ of $A$ such that $\bar{\mathfrak{h}}(I)v=0$.  We use the technique of [6].  We prove that $X_{-\alpha}(I)v=0, \alpha \geq 0$ by induction on the height of $\alpha$ (where $X_{\alpha}$ is a root vector of root $\alpha$ inside $\tilde{\mathfrak{g}}$). The assertion is clear for $\alpha  = 0$ by assumption that $\bar{\mathfrak{h}}(I)v = 0$.  It is proved in the Proposition 2.4 of [6] that $X_{-\alpha}(I)v$ is highest weight vector for $\tilde{\mathfrak{g}}(A)$.  Now consider for $n>0, L_n(a)X_{-\alpha}(I)v=X_{-\alpha}(I)L_n(a)v+[L_n(a),X_{-\alpha}(I)]v$.  The first term zero and the second term is zero by induction as the height decreases.  This proves $X_{-\alpha}(I)v$ is a highest weight vector for $\tau(A)$ inside an irreducible module $V(\psi)$.  Thus proving $X_{-\alpha}(I)v=0$. Similarly we can prove that $\text{Vir}(I)v = 0$. Now consider $W=\{w \in V(\psi) \,\, | \,\, \tau(I)v=0\}$ which can be verified to be $\tau(A)$-module.  From above we know that $v \in W$.  Hence $W$ is a non-zero submodule of $V(\psi)$ which means $W=V(\psi)$.  Thus it follows that $\tau(I)V(\psi)=0$.  Now it is easy to see that $V(\psi)$ has finite dimensional weight spaces as it is a module for $\tau(A/I)$.  See [6] for details.
\end{proof}
\section{Integrable modules for non-zero level}
In this section we define integrable modules for $\tau(A)$ and classify them when $K$ acts non-trivially.  
\begin{defn} 
A module $V$ of $\tau(A)$ is called integrable if the following holds.\\
$~~~~$ (i) $V=\bigoplus_{\lambda \in \bar{\mathfrak{h}}^*}V_{\lambda}$ where $V_{\lambda}=\{v \in V\,\,|\,\,hv=\lambda(h)v, \forall h \in \bar{\mathfrak{h}},\text{dim} \, V_{\lambda}< \infty \}$\\\,
$~~~~$  (ii) For any $v \in V, \alpha \in \Delta^{real}$, and $a \in A$, there is exists $N=N(\alpha,v,a) $ such that $X_{\alpha}^N(a).v=0$ where $X_{\alpha}$ is a root vector of $\mathfrak{g}_{\alpha}$ .
 \end{defn}
\begin{prop} \rm
Suppose $V$ is an irreducible integrable module for $\tau(A)$.  Suppose the  central element $K$ acts non-trivially.  Then $V$ is an highest weight module or a lowest weight module.
\end{prop}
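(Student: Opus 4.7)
The plan is to produce a non-zero weight vector $v_1 \in V$ annihilated by $\tau^+(A)$ on which $\tau^0(A) = \bar{\mathfrak{h}}(A)$ acts through a single character $\psi_0$; irreducibility of $V$ will then force $V = U(\tau^-(A))v_1$, a highest weight module. The lowest weight case is handled symmetrically by exchanging $N^+ \leftrightarrow N^-$ and $L^+ \leftrightarrow L^-$ (e.g.\ via $t \mapsto t^{-1}$, $L_n \mapsto L_{-n}$), so I focus on the highest weight case.

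\textbf{Step 1 (affine annihilation).} Consider the subalgebra $\tilde{\mathfrak{g}}(A) \oplus L_0 \otimes A \subset \tau(A)$. Because $[L_0 \otimes a,\, X \otimes t^m \otimes b] = m\,X \otimes t^m \otimes ab$, the element $L_0$ plays the role of the affine degree derivation, and $V$ is an integrable module for this loop affine algebra (with derivation), with finite-dimensional $\bar{\mathfrak{h}}$-weight spaces and $K$ acting by the non-zero scalar $c$. The techniques of [6] — which adapt Chari's $\mathfrak{sl}_2$-argument at each real root $\alpha + m\delta$ to the $A$-twisted setting, using crucially that $K$ acts non-trivially — then produce a non-zero vector $v_1 \in V$ with $N^+(A) v_1 = 0$ on which the abelian algebra $\bar{\mathfrak{h}}(A)$ acts via a linear character $\psi_0$.

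\textbf{Step 2 (Virasoro annihilation).} From $[L_n \otimes a,\, X \otimes t^m \otimes b] = m\,X \otimes t^{m+n} \otimes ab$, the operator $L_n \otimes a$ normalises $N^+(A)$ for every $n \geq 0$, so $W := \{v \in V : N^+(A)v = 0\}$ is stable under both $L^+(A)$ and $\bar{\mathfrak{h}}(A)$. To show $L^+(A) v_1 = 0$ inside $W$ I propose a Sugawara-style comparison. The $\tilde{\mathfrak{g}}$-submodule $U(\tilde{\mathfrak{g}}) v_1 \subset V$ is an integrable highest-weight $\tilde{\mathfrak{g}}$-module at the non-critical level $c$, on which the Sugawara construction defines Virasoro operators $L_n^{\mathrm{Sug}}$ satisfying $[L_n^{\mathrm{Sug}},\, X \otimes t^m] = m\,X \otimes t^{m+n}$ — exactly the commutator of the ambient $L_n$. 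Thus $L_n - L_n^{\mathrm{Sug}}$ commutes with $\tilde{\mathfrak{g}}$ on $U(\tilde{\mathfrak{g}}) v_1$, and combining Schur's lemma on a $\tilde{\mathfrak{g}}$-irreducible component with the grading relation $[L_0, L_n] = n L_n$ kills this difference at $v_1$ for each $n > 0$; since $L_n^{\mathrm{Sug}} v_1 = 0$ on a highest weight vector, one obtains $L_n v_1 = 0$. The analogous identity $L_n \otimes a \cdot v_1 = 0$ for $a \in A$ is then obtained by running the same argument with the $A$-Sugawara operators $T_n(a, 1)$ introduced in Section 3 of the paper in place of the classical Sugawara operators.

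\textbf{Main obstacle.} The crux lies in Step 2. The Schur--Sugawara identification is delicate because $V|_{\tilde{\mathfrak{g}}}$ need not be irreducible and the critical level could in principle appear as an isotypic component: one must work inside the cyclic $\tilde{\mathfrak{g}}$-submodule generated by $v_1$ and show that the difference $L_n - L_n^{\mathrm{Sug}}$ annihilates \emph{exactly} the vector $v_1$, even when it does not vanish globally. Upgrading from the base Virasoro to the full $L_n \otimes A$ depends on the properties of the $T_r(a, b)$ developed in Section 3, so the proof is logically intertwined with the later category-$\mathcal{O}$ machinery. Once $\tau^+(A) v_1 = 0$ and $\bar{\mathfrak{h}}(A) v_1 = \psi_0 v_1$ are established, irreducibility of $V$ yields a surjection from the Verma module $M(\psi_0)$ onto $V$, exhibiting $V = V(\psi_0)$ as a highest weight module.
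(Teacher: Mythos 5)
Your Step 2 is where the proposal breaks, and it breaks for a reason the paper itself flags. The commutator you need to reproduce is $[L_n\otimes a,\,X\otimes t^m\otimes b]=m\,X\otimes t^{m+n}\otimes ab$, which multiplies by $a$ in the $A$-slot; there is no Sugawara-type operator built inside $U(\tilde{\mathfrak{g}}(A))$ whose adjoint action does this (the introduction says exactly this: the Virasoro action on highest weight modules of $\tilde{\mathfrak{g}}$ ``does not hold at loop level''). In particular the operators $T_n(a,1)$ of Section~3 cannot serve as substitutes: by construction $\Omega(a,b)$, and hence every $T_j(a,b)=-\tfrac{1}{j}[L_j,\Omega(a,b)]$, \emph{commutes} with $\tilde{\mathfrak{g}}$ --- they are affine central operators, and the paper explicitly states they are not Sugawara operators --- so the difference ``$L_n\otimes a - T_n(a,1)$'' does not intertwine the $\tilde{\mathfrak{g}}(A)$-action. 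There is also a circularity: the $T_j(a,b)$ involve infinite sums that only converge on category $\mathcal{O}$ modules, and that local-finiteness is not known for a general irreducible integrable $V$ (it is essentially what you are trying to prove). Even in the unlooped case $a=1$, your Schur argument is inconclusive: $D_n=L_n-L_n^{\mathrm{Sug}}$ is a $\tilde{\mathfrak{g}}$-intertwiner from $U(\tilde{\mathfrak{g}})v_1$ into $V$ that raises the $L_0$-eigenvalue by $n$, and since $L_0\notin\tilde{\mathfrak{g}}$, Schur's lemma only says its image is zero \emph{or} an isomorphic copy of the same $\tilde{\mathfrak{g}}$-module sitting $n\delta$ higher in the weight lattice; it does not force $D_nv_1=0$ unless you already know $\lambda+n\delta$ is not a weight of $V$.

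That last observation is precisely the idea you are missing, and it is the whole content of the paper's (very short) proof. Since $L_n(a)$ is central for $\tilde{\mathfrak{h}}$ and satisfies $[L_0,L_n(a)]=nL_n(a)$, it is a root vector of $\tau(A)$ of weight $n\delta$; thus $\tau^+(A)$ is entirely supported on positive roots of the affine root system. The standard argument for integrable modules at positive level (Proposition 2.4 of [2], whose proof uses only $\mathfrak{sl}_2$-theory at real roots and the finite-dimensionality of weight spaces, and so goes through verbatim for arbitrary $A$) produces a weight $\lambda$ such that $\lambda+\eta$ is not a weight of $V$ for any positive root $\eta$, imaginary roots $n\delta$ included. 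Then $N^+(A)V_\lambda=0$ \emph{and} $L_n(a)V_\lambda\subseteq V_{\lambda+n\delta}=0$ for all $n>0$, $a\in A$, purely for weight reasons --- the Virasoro annihilation is free and no Sugawara comparison is needed. Your two-step split discards this, and the machinery you substitute for it does not close the gap.
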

\begin{proof}
It is a standard fact that $K$ acts by an integer in an integrable module for an affine Lie algebra.  We can assume $K$ acts by a positive integer. If $K$ acts as a negative integer we get a lowest weight module.

Let $V=\bigoplus_{\lambda \in \bar{\mathfrak{h}}^*}V_{\lambda}, \,\text{dim} \, V_{\lambda}< \infty$.  Now by arguments similar to the proof of Proposition 2.4 of [2], it follows that $V$ is a highest weight module.  The proof in [2]  is given only for Laurent polynomial algebra in several variables.  But the proof works for any algebra $A$.  Just note that $L_n (a), a \in A$, corresponds to root $n\delta$ where $\delta$ is standard null root of the affine Lie algebra. See also [3] for similar result.
\end{proof}
  We will now classify integrable highest weight modules $V(\psi)$.  In other words we will indicate for what $\psi$  the module $V(\psi)$ is integrable.  We already know that there is a co-finite Ideal $I$ of $A$ such that $(\bar{\mathfrak{h}}\otimes I)V(\psi)=0$ since we are assuming the weight spaces are finite dimensional(see Proposition 1.1)

\begin{thm} \rm
Suppose $V(\psi)$ is integrable for $\tau(A)$.  Then there is co-finite ideal $I$ of $A$ such that $\psi(L_0(I))=0$ and $\psi(\tilde{\mathfrak{h}} \otimes \sqrt{I})=0$
\end{thm}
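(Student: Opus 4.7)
By definition of integrability $V(\psi)$ has finite-dimensional weight spaces, so Proposition 1.1 supplies a co-finite ideal $I \subset A$ with $\psi(\bar{\mathfrak{h}} \otimes I) = 0$ and $\tau(I)V(\psi) = 0$. Since $L_0 \in \bar{\mathfrak{h}}$ this already yields the first half $\psi(L_0(I)) = 0$, and $V(\psi)$ descends to a $\tau(A/I)$-module. The remaining task is to upgrade the Proposition 1.1 vanishing $\psi(\tilde{\mathfrak{h}} \otimes I) = 0$ to $\psi(\tilde{\mathfrak{h}} \otimes \sqrt{I}) = 0$, which is precisely where the integrability hypothesis enters.

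Since $\tilde{\mathfrak{h}}$ is spanned by the simple coroots $\{h_{\alpha_i}\}_{i=0}^{\mathrm{rank}(\mathfrak{g})}$ of $\tilde{\mathfrak{g}}$ and $h_{\alpha_0} = K - \theta^\vee$, it suffices to prove $\psi(h_\alpha \otimes a) = 0$ for each simple root $\alpha$ of $\tilde{\mathfrak{g}}$ and each $a \in \sqrt{I}$; the central direction $K \otimes a$ is then recovered by combining the $\alpha_0$ case with the finite simple root cases. For such a fixed $\alpha$ and $a$, write $c := \psi(h_\alpha \otimes a)$ and induct on the smallest integer $n \geq 1$ with $a^n \in I$. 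The base $n = 1$ is Proposition 1.1. For $n \geq 2$, each $a^k$ with $k \geq 2$ has nilpotency degree at most $\lceil n/k \rceil < n$, so the inductive hypothesis yields $\psi(h_\alpha \otimes a^k) = 0$; in particular $(f_\alpha \otimes a^k)v$ is killed by $e_\alpha \otimes 1$, because $(e_\alpha \otimes 1)(f_\alpha \otimes a^k)v = (h_\alpha \otimes a^k)v = 0$.

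With $E := e_\alpha \otimes 1$ and $F := f_\alpha \otimes a$, a direct bracket computation based on $[E, F] = h_\alpha \otimes a$ and $[h_\alpha \otimes a, F] = -2\, f_\alpha \otimes a^2$ produces an expansion
\[
[E, F^m] \;=\; m\, F^{m-1}(h_\alpha \otimes a) + \text{(corrections containing some factor } f_\alpha \otimes a^k,\ k \geq 2\text{)}.
\]
Applied to $v$ and iterated, the inductive annihilations force the correction terms to collapse under repeated $E$-action, leaving the clean identity $E^N F^N v = N!\, c^N v$. Integrability supplies an $N$ with $F^N v = 0$, so $c = 0$. The main obstacle is verifying this collapse rigorously: one must argue that every monomial in the expansion of $E^N F^N v$ which contains some $f_\alpha \otimes a^k$ with $k \geq 2$ is eventually annihilated by a subsequent $E$, which calls for a careful filtration by total $a$-degree and by distance from $v$, iterating the inductive hypothesis along the way. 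For the affine simple root $\alpha_0$ the strategy is identical after accounting for the central contribution in $[e_{\alpha_0}, f_{\alpha_0}]$; combined with the finite simple root cases this yields $\psi(K \otimes a) = 0$ and closes the proof.
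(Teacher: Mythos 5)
Your proposal is correct in outline but takes a genuinely different route from the paper. The paper's proof is a two-step reduction: after invoking Proposition 1.1 to get the co-finite ideal $I$ (exactly as you do), it restricts to the $\tilde{\mathfrak{g}}(A)\oplus\mathbb{C}L_0$-submodule $M$ generated by the highest weight vector, passes to its irreducible quotient $\bar{M}$, and cites Theorem 3.4 of [6] (the classification of irreducible integrable highest weight modules for current Kac--Moody algebras) to conclude $\tilde{\mathfrak{g}}(\sqrt{I})\bar{M}=0$, hence $\psi(\tilde{\mathfrak{h}}\otimes\sqrt{I})=0$. You instead re-prove the needed special case of that cited theorem from scratch by the classical Garland-type $\mathfrak{sl}_2\otimes\mathbb{C}[a]$ computation on each simple coroot of $\tilde{\mathfrak{g}}$; this is self-contained where the paper is not, at the cost of the combinatorial verification you yourself flag as ``the main obstacle.'' That obstacle is genuinely fillable, and more cheaply than your proposed filtration by distance from $v$: since $e_{\alpha}\otimes b$ kills $v$ for every $b$, PBW lets you write $E^NF^Nv=P\,v$ with $P\in U(\tilde{\mathfrak{h}}\otimes a\mathbb{C}[a])$, and every bracket among $E=e_\alpha\otimes 1$ and $F=f_\alpha\otimes a$ preserves total $a$-degree, which is $N$; a PBW monomial $\prod_i\psi(h_\alpha\otimes a^{k_i})$ with all $k_i=1$ must therefore have exactly $N$ factors, so the unique monomial surviving your inductive hypothesis is $c^N$, with coefficient $N!$ by the ordinary $\mathfrak{sl}_2$ identity (your $N=2$ check confirms the pattern). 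Two further points you handle correctly but should make explicit if you write this up: the simple coroots $\alpha_0^\vee=K-\theta^\vee,\alpha_1^\vee,\dots,\alpha_\ell^\vee$ do span $\tilde{\mathfrak{h}}=\mathfrak{h}\oplus\mathbb{C}K$ precisely because the degree derivation is excluded from $\tilde{\mathfrak{h}}$ here, and for $\alpha_0$ the triple $e_{\alpha_0}\otimes 1$, $f_{\alpha_0}\otimes a$, $\alpha_0^\vee\otimes a$ still closes into a copy of $\mathfrak{sl}_2\otimes\mathbb{C}[a]$ because $K$ is central, so no new central corrections actually appear in the expansion. In short: the paper buys brevity by outsourcing to [6], you buy self-containedness at the price of a computation you sketched but did not complete.
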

\begin{proof}
We already know that there is co-finite ideal $I$ of $A$ such that \\ $\psi(\tilde{\mathfrak{h}}\otimes I \bigoplus L_0 \otimes I)=0$.  Now let $M$ be the module generated by the highest weight vector for the Lie algebra $\tilde{\mathfrak{g}}(A) \oplus \mathbb{C} L_0$.  Then $M$ is an integrable highest weight module and let $\bar{M}$ be the unique irreducible quotient for Lie algebra  $\tilde{\mathfrak{g}}(A)\bigoplus \mathbb{C}L_0$.  Then by Theorem 3.4 of [6] it follows $\tilde{\mathfrak{g}}(\sqrt{I}).\bar{M}=0$.  In particular $(\tilde{\mathfrak{h}}\otimes \sqrt{I})\bar{M}=0$.  This completes the proof. 
\end{proof}
We will now prove the converse.\\
(2.2) Suppose $J=M_1\cap M_2 \cap \ldots \cap M_k$ where each $M_i$ is a maximal ideal and they are distinct.  By Chinese remainder theorem $A/J \cong \bigoplus_{i=1}^k \frac{A}{M_i} \cong \bigoplus \mathbb{C}(k-copies)$.  In particular for any Lie Algebra $\mathfrak{g'}$ we have $\mathfrak{g'}\otimes A\rightarrow \bigoplus \mathfrak{g'}$ (k-copies) is a surjective map with kernal $\mathfrak{g'}\otimes J$.  For a co-finite ideal $I$ we have the radical ideal $\sqrt{I}=J$.  It is a standard fact that $J$ is intersection of maximal ideals.  So that $ \tilde{\mathfrak{h}} \otimes A/J \cong \bigoplus \tilde{\mathfrak{h}} $(k-copies).\\
(2.3) We say $\psi: \bar{\mathfrak{h}} \otimes A \rightarrow \mathbb{C}$ is dominant integral if there is a co-finite ideal $I$ such that $\psi (\tilde{\mathfrak{h}}\otimes \sqrt{I})=0$, $\psi_{i}( \bar{\mathfrak{h}})$ is dominant integral for $1 \leq i \leq k$, where
$\psi_i$ denote the restricion of $\psi$ on the $i$-th piece of $\bigoplus{\bar{\mathfrak{h}}}$.

\begin{thm} \rm
Suppose $V(\psi)$ is an irreducible highest weight module for $\tau(A)$ with finite dimensional weight spaces.  Assume that there is a co-finite ideal $I$ of $A$ such that $\psi(L_0\otimes I)=0$ and $\psi(\tilde{\mathfrak{h}}\otimes \sqrt{I})=0$ and $\psi$ is dominant integral with respect to $\sqrt{I}$. Then $V(\psi)$ is integrable.
\end{thm}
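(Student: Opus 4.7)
The plan is to combine Proposition~1.1 with the converse direction of~[6, Theorem 3.4] (the integrability criterion for irreducible highest weight modules over $\tilde{\mathfrak{g}}(A)\oplus \mathbb{C}L_0$) and a standard ``locally nilpotent vectors form a submodule'' argument. Since $I\subseteq\sqrt{I}$ the hypothesis gives $\psi(\tilde{\mathfrak{h}}\otimes I)=0$, which together with $\psi(L_0\otimes I)=0$ yields $\psi(\bar{\mathfrak{h}}\otimes I)=0$; Proposition~1.1 then supplies condition~(i) of Definition~2.1 and the additional conclusion $\tau(I)V(\psi)=0$. What remains is condition~(ii): local nilpotency of every real-root vector $X_\beta(a)$ of $\tilde{\mathfrak{g}}(A)$ on every $w\in V(\psi)$.

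The first substantive step reduces (ii) to showing that for each simple root $\alpha_i$ of $\tilde{\mathfrak{g}}$ (including the affine one $\alpha_0=\delta-\theta$) and each $a\in A$ there exists $N=N(i,a)$ with $f_i(a)^{N}v=0$, where $v$ is the highest weight generator. For fixed $i$, the span of $e_i(A), f_i(A), h_i(A)$ is a copy of $\mathfrak{sl}_2\otimes A$ inside $\tilde{\mathfrak{g}}(A)$ (the central element $K$ being absorbed into $h_0$ when $i=0$), and $v$ is a highest weight vector for it. The hypotheses $\psi(\tilde{\mathfrak{h}}\otimes\sqrt{I})=0$ and the dominant integrality of each component $\psi_j$ are precisely the input needed to invoke the converse of~[6, Theorem 3.4] on the $\tilde{\mathfrak{g}}(A)\oplus\mathbb{C}L_0$-submodule of $V(\psi)$ generated by $v$; this delivers the desired $N$ and hence local nilpotency at $v$ for every negative simple root vector.

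To finish, set $W=\{w\in V(\psi)\mid X_\beta(a)^{N}w=0 \text{ for some } N,\text{ every real root }\beta,\text{ every }a\in A\}$. A Weyl-group/Serre-relations argument upgrades local nilpotency at $v$ from simple roots to every negative real root; positive real roots kill $v$ outright, so $v\in W$. That $W$ is $\tau(A)$-stable reduces to local ad-nilpotency of $X_\beta(a)$ on $\tau(A)$: this is standard on $\tilde{\mathfrak{g}}(A)$ because $\mathrm{ad}\,X_\beta$ is locally nilpotent on $\tilde{\mathfrak{g}}$ for real $\beta$, and on $Vir(A)$ a single bracket $[L_n(b),X_\beta(a)]$ drops into $\tilde{\mathfrak{g}}(A)$, where the previous point applies. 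Irreducibility of $V(\psi)$ then forces $W=V(\psi)$. The principal obstacle will be the affine simple root $\alpha_0=\delta-\theta$: its $\mathfrak{sl}_2$-triple has $h_0=K-H_\theta$, so the dominant-integral input must pair the level $\psi(K)$ correctly with the classical weights on each of the $k$ pieces of $A/\sqrt{I}$; Definition~(2.3) is calibrated precisely so that this works uniformly across the components.
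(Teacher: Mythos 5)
Your overall architecture matches the paper's at the top level: establish local nilpotency of the real root vectors on the highest weight generator $v$ by analysing the $\tilde{\mathfrak{g}}(A)$-module generated by $v$, then propagate to all of $V(\psi)$ via local ad-nilpotency on $\tau(A)$ and Lemma 3.4(b) of [11]. But there is a genuine gap at the central step. You invoke the integrability criterion of [6, Theorem 3.4] ``on the $\tilde{\mathfrak{g}}(A)\oplus\mathbb{C}L_0$-submodule of $V(\psi)$ generated by $v$.'' That submodule, $M=U(\tilde{\mathfrak{g}}(A))v$, is a highest weight module but is not a priori irreducible as a $\tilde{\mathfrak{g}}(A)$-module --- $V(\psi)$ is only irreducible over the larger algebra $\tau(A)$. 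The criterion of [6] applies to \emph{irreducible} highest weight modules; applied to the irreducible quotient $\bar{M}$ it only tells you that $f_i(a)^N v$ lies in the maximal proper $\tilde{\mathfrak{g}}(A)$-submodule of $M$, not that it vanishes in $V(\psi)$. (Compare the Verma module of $\mathfrak{sl}_2$ with dominant integral highest weight: $f^N v\neq 0$ for every $N$ despite dominant integrality.) Equivalently, the standard route of killing $f_i(a)^N v$ by exhibiting it as a highest weight vector requires showing it is annihilated not only by $N^{+}(A)$ but also by $L^{+}(A)$; for the classical simple roots this is free since $[L_n(b),f_i(a)]=0$, but for the affine simple root the negative root vector is $x_{\beta}\otimes t^{-1}$ ($\beta$ the highest root) and $[L_n(b),(x_{\beta}\otimes t^{-1})(a)]=-(x_{\beta}\otimes t^{n-1})(ab)\neq 0$, so the Virasoro raising operators do not obviously annihilate it.

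This missing step is precisely Lemma 2.4 of the paper: $M$ \emph{is} irreducible over $\tilde{\mathfrak{g}}(A)\oplus\mathbb{C}L_0$, and its proof is the technical heart of the theorem. There one shows that a would-be extra $\tilde{\mathfrak{g}}(A)$-highest weight vector $w$ of maximal weight in $M$ is killed by $L_1(a)$ and $L_2(a)$, hence is a $\tau(A)$-highest weight vector, contradicting irreducibility of $V(\psi)$; the only problematic weights $\psi-\delta$ and $\psi-2\delta$ are excluded by a Casimir-operator computation on $M\cong\bigotimes_i V(\lambda_i)$, using that $(\lambda_i+\rho,\delta)>0$ when each $\lambda_i$ is dominant integral. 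Once this irreducibility is in place, $M$ is the integrable module $\bigotimes_i V(\lambda_i)$ and the remainder of your sketch (local nilpotency at $v$, ad-nilpotency on $\tau(A)$ including the $\mathrm{Vir}(A)$ part, and the submodule argument forcing $W=V(\psi)$) is essentially the paper's conclusion. You need to supply the irreducibility of $M$ before any appeal to [6].
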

Before proving the theorem we need few Lemmas.\\
\begin{lem} \rm
With above notation let $M=U(\tilde{\mathfrak{g}}\otimes A)v$, where $v$ is a highest weight vector of $V(\psi)$.  Then $M$ is irreducible module for $(\tilde{\mathfrak{g}} \otimes A \bigoplus \mathbb{C}L_0)$.
\end{lem}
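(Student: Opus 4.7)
The plan is to argue by contradiction. Suppose $M$ admits a nonzero proper $(\tilde{\mathfrak{g}}(A)\oplus\mathbb{C}L_0)$-submodule $N$. I will produce inside $N$ a highest weight vector for the whole of $\tau(A)$ having strictly smaller $\bar{\mathfrak{h}}$-weight than $v$, contradicting the fact that $V(\psi)$ is $\tau(A)$-irreducible with $v$ as its unique (up to scalar) highest weight vector.

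For the setup, since every negative affine root has non-positive $L_0$-weight, $M=U(N^-(A))v$ decomposes as $M=\bigoplus_{k\ge 0}M_{h_0-k}$ with $M_{h_0}=\mathbb{C}v$, where $h_0=\psi(L_0)$; each weight space is finite dimensional by Proposition~1.1. Because $v$ generates $M$ and $N$ is proper, $N\subset\bigoplus_{k\ge 1}M_{h_0-k}$. Pick a nonzero weight vector $w\in N$ of maximal $L_0$-weight $h_0-k$ (with $k\ge 1$) and, among those, of maximal $\mathfrak{h}$-weight $\mu$. A direct check then shows $N^+(A)w=0$: for a positive affine root $\beta$ with positive $\delta$-component, $X_\beta(a)w\in N$ has larger $L_0$-weight, hence zero by maximality; for $\beta\in\Delta^+(\mathfrak{g})$, $X_\beta(a)w\in N$ has $\mathfrak{h}$-weight $\mu+\beta>\mu$, again zero. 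The commutator $[N^+(A),L^+(A)]\subset N^+(A)$ (from $[L_n,X_\gamma\otimes t^p]=pX_\gamma\otimes t^{p+n}$) shows that each $L_n(a)w$ for $n>0$ is also $N^+(A)$-annihilated; and for $n>k$ it has $L_0$-weight exceeding $h_0$, so it vanishes in $V(\psi)$.

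The remaining step is to establish $L_n(a)w=0$ for all $0<n\le k$. The delicate case is $n=k$: then $L_k(a)w\in V(\psi)_{h_0}=\mathbb{C}v$, so $L_k(a)w=c(a)v$ and one must prove $c(a)=0$. My strategy is to exploit that, because $L_n(a)v=0$ for $n>0$, the action of $L^+(A)$ on $M=U(\tilde{\mathfrak{g}}(A))v$ is realized as the adjoint derivation $L_n(a)\cdot uv=[L_n(a),u]\cdot v$ on $U(\tilde{\mathfrak{g}}(A))$. Combining this with the dominant integrality of $\psi$---which via Theorem~3.4 of [6] yields $\tilde{\mathfrak{g}}(\sqrt{I})\bar M=0$ for the irreducible $(\tilde{\mathfrak{g}}(A)\oplus\mathbb{C}L_0)$-quotient $\bar M$ of $M$---and with $\tau(I)V(\psi)=0$ from Proposition~1.1, the goal is to show that $\mathrm{ad}(L_k(a))$ preserves the left ideal of $U(\tilde{\mathfrak{g}}(A))$ defining the submodule $N$, so that $L_k(a)N\subset N$ and therefore $c(a)v\in N\cap\mathbb{C}v=0$. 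A descending induction in $n$ then handles the remaining cases $0<n<k$. After a final diagonalization of the abelian $\tau^0(A)=\tilde{\mathfrak{h}}(A)\oplus L_0(A)$ on the relevant finite-dimensional $\bar{\mathfrak{h}}$-weight space inside $N$ to ensure $\tau^0(A)$ acts on the chosen $w$ by a character, $w$ becomes a $\tau(A)$-highest weight vector of weight strictly less than that of $v$, giving the contradiction.

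The main obstacle is precisely this $\mathrm{ad}(L^+(A))$-stability of $(\tilde{\mathfrak{g}}(A)\oplus\mathbb{C}L_0)$-submodules of $M$, since it is not automatic from the axioms that the Virasoro-generated derivations preserve arbitrary such submodules. The key inputs I expect to use are $\tau(I)V(\psi)=0$, the adjoint-derivation description of the $L^+(A)$-action, and the integrability of the irreducible quotient $\bar M$; combining these to control the adjoint action of $L_k(a)$ on the annihilator ideal of $v$ modulo the maximal proper $(\tilde{\mathfrak{g}}(A)\oplus\mathbb{C}L_0)$-submodule is the technical heart of the argument.
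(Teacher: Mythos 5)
Your overall framework (argue by contradiction, locate a $\tilde{\mathfrak{g}}(A)$-highest weight vector $w\notin\mathbb{C}v$, and upgrade it to a $\tau(A)$-highest weight vector to contradict irreducibility of $V(\psi)$) matches the paper's. But there is a genuine gap at exactly the point you flag as ``the technical heart'': you never actually prove that $L_n(a)w=0$, in particular that $L_k(a)w$ cannot be a nonzero multiple of $v$. Your proposed mechanism --- that $\mathrm{ad}(L_k(a))$ preserves the left ideal cutting out $N$, so that $L_k(a)N\subset N$ --- is left entirely unproved, and it is doubtful it can be established in the generality you want: if arbitrary $(\tilde{\mathfrak{g}}(A)\oplus\mathbb{C}L_0)$-submodules of $M$ were $\mathrm{ad}(L^+(A))$-stable the lemma would be nearly automatic, and none of the ingredients you list ($\tau(I)V(\psi)=0$, the adjoint-derivation description of the $L^+(A)$-action, integrability of $\bar M$) is shown to imply it. There is also a secondary weakness: your $w$ is chosen maximal only \emph{within} $N$, so for $0<n<k$ the vector $L_n(a)w$ lies in $M$ but need not lie in $N$, and your maximality hypothesis gives no control over it; the ``descending induction'' you invoke for these cases is not an argument.

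The paper closes this gap by a completely different device that your proposal never touches: the Casimir operator. Since $\psi(\bar{\mathfrak{h}}\otimes\sqrt I)=0$, one gets $\tilde{\mathfrak{g}}(\sqrt I)V(\psi)=0$, so $M$ becomes a highest weight module for $\bigoplus\tilde{\mathfrak{g}}$ ($k$ copies) and factors as $M\cong\bigotimes_i V(\lambda_i)$ with each $\lambda_i$ dominant integral. The sum of the Casimirs acts on all of $M$ by the scalar $\sum_i|\lambda_i+\rho|^2-|\rho|^2$, so a $\tilde{\mathfrak{g}}(A)$-highest weight vector of weight $\psi-n\delta$ would force $\sum_i n(\lambda_i+\rho,\delta)=0$, impossible for $n>0$ by dominant integrality. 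Hence $M^+_{\psi-n\delta}=0$ for all $n>0$, which is precisely what rules out $L_1(a)w$ or $L_2(a)w$ landing in $\mathbb{C}v$; combined with the maximality of the weight of $w$ among \emph{all} highest weight vectors of $M$ (not just those in a submodule), and the fact that $L_1,L_2$ generate $L^+$, this yields $L^+(A)w=0$ and the contradiction. You should replace your ad-stability step by this Casimir computation (or supply an actual proof of the stability claim, which I do not see how to do).
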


\begin{proof}
Suppose $M$ is not irreducible for $\tilde{\mathfrak{g}}(A)$, then there exists a highest weight vector $w \not\in \mathbb{C}v$.  We can assume that $w$ is of maximal weight.\\
\underline{claim}: $w$ is a highest weight for $\tau(A)$\\
we need to prove that $L_n(a).w=0$ for $n>0$ and $a \in A$.   It is sufficient to check this for $L_1(a)$ and $L_2(b)$, $a,b \in A$ as they generate $L^+(A)$.  First notice that $L_1(a)w$ and $L_2(a)w$  belong to $M$.  It is trivial checking that $L_1(a)w$ and $L_2(a)w$  are $\tilde{\mathfrak{g}}(A)$ highest weight vector.  Since $w$ is maximal weight $L_1(a)w=0$ unless $w \in M_{\psi-\delta}^+$ and $L_2(a)w=0$ unless $w \in M_{\psi-2\delta}^+$, where + denotes highest weight vectors for $\tilde{\mathfrak{g}}(A)$.\\
\underline{subclaim}: $ M_{\psi-n\delta}^+=0, n>0.$\\
Assuming the subclaim we complete the proof of the Lemma.  So we have $w$ is a highest weight vector for $\tau(A)$ which is contradiction  as $V(\psi)$ being irreducible.  This proves the Lemma.\\
Proof of the subclaim:
Since $\psi( \bar{\mathfrak{h}}(\sqrt{I}))$ is zero, using arguments similar to Proposition 1.1 one can prove that $\tilde{\mathfrak{g}}(\sqrt{I}).V(\psi)=0$ but $\sqrt{I}=J$ is a radical ideal and as noted earlier $J=\bigcap_{i=1}^{k}M_i$ where $M_i$ are distinct maximal ideals.  As noted in 2.2 $V(\psi)$ is a module for  $\bigoplus \tilde{ \mathfrak{g}}$ (k-copies).  In particular $M$ is an highest weight module for $\bigoplus \tilde{\mathfrak{g}}$.  It is standard fact that there exists highest weight module $V(\lambda_i)$ for $\tilde{\mathfrak{g}}$ such that $M \cong \displaystyle{\bigotimes_{i=1}^{k}V(\lambda_i)}$.  Let $\tilde{\mathfrak{g}}$ be the $i^{th}$ copy of $\bigoplus\tilde{\mathfrak{g}}$.  Then one can take $V(\lambda_i)$ as $\tilde{\mathfrak{g}}$ module generated by $v$.  Further $V(\lambda_i)$ is also module for $\tilde{\mathfrak{g}}\bigoplus \mathbb{C}L_0$.  Let $\Omega_i(1\leq i\leq k)$ be the Casimir operator for $\tilde{\mathfrak{g}}$.  Then $\sum \Omega_i$ is the Casimir Operator for $\bigoplus \tilde{\mathfrak{g}}$.  It is well known that [see [11], Lemma 9.8 a] $\Omega_i$ acts as $|\lambda_i+\rho|^2-|\rho|^2$ on $V(\lambda_i)$.  Thus $\sum \Omega_i$ acts as $\sum_{i}|\lambda_i+\rho|^2-|\rho|^2$.  Suppose $M_{\psi-n\delta}^{+}\neq 0$. Then  $\sum \Omega_i$ acts on the above as $\sum_{i}|\lambda_i+\rho - n \delta|^2-|\rho|^2$.  It follows that $\sum n (\lambda_i+\rho, \delta)=0$ but by assumption each $\lambda_i$ is dominant integral and hence the above equation is not possible.  This proves the subclaim.\\
Proof of Theorem 2.3: \\
We know that $\otimes V(\lambda_i)\cong M$ is irreducible  as $\mathfrak{g}(A)$ and each $\lambda_i$ is dominant integral.  Hence $\otimes V(\lambda_i)$ is a integrable for $ \mathfrak{g}(A)$ (see corollary 10.4 of [11]).  In particular each $X_{\alpha}(a), \alpha \in {\Delta}^{real}$ is locally nilpotent on the generator $v$.  Further each $X_{\alpha}(a)$ is locally nilpotent on $\tau(A)$.  Hence by [lemma(3.4(b)) of 11] we see that each $X_{\alpha}(a)$ is locally nilpotent on $V(\psi)$.  This completes the proof of the theorem 2.3.
\end{proof}

\section{Affine central operators}
 In this section we study $\tau(A)$ modules in category $\mathcal{O}$.  We construct affine central operators acting on objects of $\mathcal{O}$ and commute with affine Lie algebra $\tilde{\mathfrak{g}}$.  We need some notation for that: \\
(3.1) A module $V$ of $\tau(A)$ is said to be in category $\mathcal{O}$ if the following holds:
\begin{enumerate}
\item $V$ is weight module for $\tau(A)$ with respect to Cartan subalgebra $\bar{\mathfrak{h}}$ and has finite dimensional weight spaces.
\item For any $v \in V$ and $a \in A$, we have $(X_{\alpha} \otimes a) v = 0$ for ht $\alpha >> 0,  \alpha \in \Delta^{+}$ and $X_{\alpha} \in \tau(A)_{\alpha}$.
\end{enumerate}
(3.2) We recall some known facts from [11].
Let $\alpha_0= - \beta+\delta$, where $\beta$ is a highest root of $\mathfrak{g}$ and $\delta$ is the standard null root of $\tilde{\mathfrak{g}}$.
 Let ( , ) be the non-degenerate bilinear form on $\bar{\mathfrak{h}}$.
  Let $\gamma: \bar{\mathfrak{h}}\rightarrow {(\bar{\mathfrak{h}})}^*$ be such that $\gamma(h_1)(h_2)=(h_1, h_2), h_1,h_2 \in \bar{\mathfrak{h}}$.  This isomorphism induces a non degenerate bilinear form on $ {(\bar{\mathfrak{h}})}^*$.\\
 Let $\rho \in (\bar{\mathfrak{h}})^*$ such that $(\rho, \alpha_i)=\frac{1}{2}(\alpha_i, \alpha_i) \,\,\forall \alpha_i$.  Let $\bar{\rho}= \rho|_{\mathfrak{h}}$. Recall $\delta\in (\bar{\mathfrak{h}})^* \ni \delta(\mathfrak{h})=0, \delta(K)=1~\text{and}~\delta(d)=0$.  Then $\rho=\bar{\rho}+h^{\vee} \Lambda_0$ (see 6.2.8 of [11]) where $h^{\vee}$ is the dual Coxeter number of $\mathfrak{g}$.  Note that $\gamma(d) = \Lambda_0$ (see 6.23. of [11]).\\
 Let $\Delta=\{\alpha+n\delta, m\delta, 0\neq m \in \mathbb{Z}, n\in \mathbb{Z}, \alpha \in \overset{\circ} {\Delta}\}$. Let $\Delta^{+}$ be the positive roots.  Let $\{h_i: 1 \leq i \leq \text{dim}\,\, \mathfrak{h}\}$ be a basis of $\mathfrak{h}$. Let $\{h^{i}: 1 \leq i \leq \text{dim} \,\, \mathfrak{h}\}$ be a dual basis of $\mathfrak{h}$ with respect to the basis $\{h_i\}$.  Then $\{h_i, d, K\},\{h^i, K, d\}$ is a dual basis of $\bar{\mathfrak{h}}$.  For each root $\alpha \in \overset{\circ} {\Delta}$, let $x_{\alpha}\in \mathfrak{g}_{\alpha}, x_{-\alpha}\in \mathfrak{g}_{-\alpha}$ such that $(x_{\alpha},x_{-\alpha})=1$, then $[x_{\alpha},x_{-\alpha}]=\gamma(\alpha)$ see theorem 2.2(e) of [11].  Then the Casimir operator for $\mathfrak{g}$ is defined by $\Omega=2 \gamma^{-1}(\rho)+\sum_ih_{i}h^i+2Kd+2\sum_{ \alpha \in\overset{\circ} {\Delta}}\sum_{n>0}x_{-\alpha}\otimes t^{-n}x_{\alpha}\otimes t^n+2\sum_{n>0}\sum_{i}h_{i}\otimes t^{-n}h^{i}\otimes t^n+2\sum_{ \overset{\circ} {\Delta}_+}x_{-\alpha}x_{\alpha}$.  See 12.8.3 of [11] and note that $\gamma^{-1}(\rho)=\gamma^{-1}(\bar{\rho})+h^{\vee}d$ (later $d$ be identified with $L_0$).\\
 (3.3) Define for $a,b \in A$\\
  $\Omega_{a,b}^1=\sum_{\alpha \in \overset{\circ} {\Delta}}\sum_{n>0}x_{-\alpha}\otimes t^{-n}(a)x_{\alpha}\otimes t^{n}(b)$\\
  $\Omega_{a,b}^2=\sum_{i}\sum_{n>0}h_{i}\otimes t^{-n}(a)h^{i} \otimes t^{n}(b)$ \\
  $\Omega_{a,b}^3=\displaystyle{\sum_{\alpha \in \overset{\circ} {\Delta}^+}x_{-\alpha}(a)x_{\alpha}(b)}$\\
   then define $\Omega(a,b)=2\gamma^{-1}(\rho)(ab)+\sum h_i(a)h^{i}(b)+K(a)d(b)+K(b)d(a)+\Omega_{a,b}^1+\Omega_{b,a}^1+\Omega_{a,b}^2+\Omega_{b,a}^2+\Omega_{a,b}^3+\Omega_{b,a}^3$.  This is exactly the operator defined in 2.4 of [5].  There it is defined for any symmetrizable Kac-Moody Lie algebra.  But here we defined only for the affine Kac-Moody Lie algebra. These operators should be seen to be vectors of competion of $\tau(A)$.\\
 \begin{prop} \rm  $[\Omega(a,b), {\tilde{\mathfrak{g}}}]=0$  on objects of $\mathcal{O}$.
\end{prop}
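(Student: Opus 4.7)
The plan is to verify $[\Omega(a,b), X] = 0$ for $X$ running over a generating set of $\tilde{\mathfrak{g}}(A)$, namely $K(c)$, $h \otimes t^n(c)$ with $h \in \mathfrak{h}$, and $x_\beta \otimes t^m(c)$ with $\beta \in \overset{\circ}{\Delta}$, for all $m,n \in \mathbb{Z}$ and $c \in A$. Since $K$ is central in $\tau$, $K(c)$ is central in $\tau(A)$, so the case $X=K(c)$ is immediate. For the others I would split $\Omega(a,b)$ as a finite piece $F(a,b) := 2\gamma^{-1}(\rho)(ab) + \sum_i h_i(a)h^i(b) + K(a)d(b) + K(b)d(a) + \Omega^3_{a,b} + \Omega^3_{b,a}$ plus the two infinite sums $\Omega^1_{a,b}+\Omega^1_{b,a}$ and $\Omega^2_{a,b}+\Omega^2_{b,a}$, and compute the commutator term by term. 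The category $\mathcal{O}$ hypothesis ensures that on any given vector only finitely many $n>0$ in the latter sums contribute, so Leibniz-rule manipulations and reorderings are legitimate, i.e.\ $\Omega(a,b)$ is a well-defined element of the completion acting on $\mathcal{O}$.

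The main computation is $[\Omega(a,b), x_\beta \otimes t^m(c)]$, which follows the standard Sugawara pattern (cf.\ 12.8 of [11]), with every atomic commutator given by $[X\otimes t^p(u), Y\otimes t^q(v)] = [X,Y]\otimes t^{p+q}(uv) + p\,\delta_{p+q,0}(X,Y)K(uv)$ producing both a Lie-bracket piece and a cocycle piece. After applying Leibniz to each bilinear summand, the resulting contributions sort into three types: (i) terms proportional to $x_\gamma\otimes t^{m+p}(\cdot)$ with $p\neq 0$, which cancel pairwise between $\Omega^i_{a,b}$ and $\Omega^i_{b,a}$ once the summation index is shifted $n\mapsto n-m$ and the symmetry $a\leftrightarrow b$ is used; (ii) multiples of $x_\beta\otimes t^m(\cdot)$ itself, which recombine via the quadratic Casimir identity on $\mathfrak{g}$ with the finite pieces $F(a,b)$ and, using the decomposition $\gamma^{-1}(\rho) = \gamma^{-1}(\bar\rho) + h^\vee d$, cancel up to the standard dual Coxeter contribution; (iii) central pieces $K(\cdot)$ produced by the $\delta_{p+q,0}$ cocycle of $\tilde{\mathfrak{g}}$, which cancel against $[K(a)d(b) + K(b)d(a),\, x_\beta\otimes t^m(c)] = m\bigl(K(a)\,x_\beta\otimes t^m(bc) + K(b)\,x_\beta\otimes t^m(ac)\bigr)$ because $d=L_0$ satisfies $[d, x_\beta\otimes t^m] = m x_\beta\otimes t^m$. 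The case $X = h\otimes t^n(c)$ is entirely analogous and in fact simpler, since commuting with a Cartan element produces no off-diagonal root vectors.

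The main obstacle is purely bookkeeping: one is essentially replaying the classical Sugawara verification (or the proof of 2.4 in [5]) while tracking one additional tensor factor from $A$ on every element. Each intrinsic bracket in $\tilde{\mathfrak{g}}$ gets decorated with a product of $A$-parameters, and since $A$ is commutative the different orderings $(ab)c = a(bc) = (ac)b$ coincide, so the cancellations of [11, 12.8] and [5] go through verbatim once the bilinear combinatorics of $a,b,c$ are checked to match between the three types of terms above. Thus the proposition reduces to the computation of [5], which is carried out there for any symmetrizable Kac--Moody algebra and specializes cleanly to the affine setting considered here.
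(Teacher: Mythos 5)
Your overall strategy --- split $\Omega(a,b)$ into a finite piece plus the two normally ordered infinite sums, use the category $\mathcal{O}$ condition to make sense of the infinite sums, and replay the Sugawara-type cancellation of [11, 12.8] and [5] --- is the right one for the statement actually asserted, and it is in substance what the paper does (the paper simply defers the whole computation to Theorem 2.5 of [5]). However, you set out to prove something strictly stronger, and false: you take your generating set to be $K(c)$, $h\otimes t^{n}(c)$, $x_{\beta}\otimes t^{m}(c)$ with $c$ ranging over all of $A$, i.e.\ you aim at $[\Omega(a,b),\tilde{\mathfrak{g}}(A)]=0$. The proposition, the definition of affine central operator that follows it, and the use made of it in Theorem 3.2 all concern only $\tilde{\mathfrak{g}}=\tilde{\mathfrak{g}}\otimes 1\subset\tau(A)$ (indeed, an operator commuting with all of $\tilde{\mathfrak{g}}(A)$ would be useless here: it would act by a scalar on the irreducible modules one wants to decompose).

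The stronger statement fails precisely at your cancellation steps (i)--(ii). Commuting a decorated summand $u\otimes t^{-n}(a)\,v\otimes t^{n}(b)$ against $x_{\beta}\otimes t^{m}(c)$ attaches the extra factor $c$ to only one of the two tensor slots, producing decorations $(ac,b)$ from one slot and $(a,bc)$ from the other; the invariance identity $\sum_{k}[x,u_{k}]\otimes u^{k}+\sum_{k}u_{k}\otimes[x,u^{k}]=0$ that drives the classical cancellation exchanges the Lie-algebra entries of the two slots but cannot move $c$ from one slot to the other, so the terms no longer match unless $c$ is a scalar. Concretely, for $\mathfrak{g}=\mathfrak{sl}_{2}$ the degree-zero part already gives $[\,2\gamma^{-1}(\bar{\rho})(ab)+\textstyle\sum_{i}h_{i}(a)h^{i}(b)+\Omega^{3}_{a,b}+\Omega^{3}_{b,a},\,X(c)\,]=h(a)X(bc)-h(bc)X(a)+h(b)X(ac)-h(ac)X(b)$, which for $a=b=1$, $c=t$ equals $2\bigl(hX(t)-h(t)X\bigr)\neq 0$, and nothing from $\Omega^{1}$, $\Omega^{2}$ or $K(a)d(b)+K(b)d(a)$ can cancel it, since those contribute only products of factors of nonzero $t$-degree (or vanish, as $[L_{0},X]=0$). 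Your closing remark that commutativity of $A$ makes ``the cancellations go through verbatim'' is exactly the point that breaks down: the obstruction is not the ordering of $a,b,c$ but the asymmetric placement of $c$. If you restrict to $c=1$ --- which is all the proposition claims --- your computation collapses to the standard one, the sketch is correct, and it coincides with the paper's reduction to [5].
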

\begin{proof}
 The proof is exactly as given in Theorem 2.5 of [5] applied to the affine case.
 \end{proof}

   \begin{defn} 
   An operator $z$ acting on objects of $\mathcal{O}$ is called affine central operator if $z$ commutes with $\tilde{\mathfrak{g}}$.
   \end{defn}
For example $\Omega(a,b)$ is an affine central operator.\\
(3.4) Let for $j\neq 0, T_j (a,b)=\frac{-1}{j}[L_j,\Omega(a,b)]$ and $T_0(a,b)=\Omega(a,b)$.
These operators are motivated by Sugawara operators. But they are not Sugawara operators. Sugawara operators are part of these operators when $a = b =1$.
\begin{thm} \rm
$T_j (a,b)$ for $j \in \mathbb{Z}$ is an affine central operator
\end{thm}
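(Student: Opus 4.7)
The plan is to reduce everything to Proposition 3.1 via a single application of the Jacobi identity. The case $j=0$ is nothing but Proposition 3.1, since $T_0(a,b)=\Omega(a,b)$. So fix $j\neq 0$ and an arbitrary $Y\in\tilde{\mathfrak{g}}$ (that is, $Y=X\otimes t^m$ for some $X\in\mathfrak{g}$, $m\in\mathbb{Z}$, or $Y=K$). Identifying elements of $\tau$ and $\tilde{\mathfrak{g}}$ with their images $(\cdot)\otimes 1$ in $\tau(A)$, compute in $\operatorname{End}(V)$:
$$[T_j(a,b),Y]=-\tfrac{1}{j}\bigl[[L_j,\Omega(a,b)],Y\bigr]=-\tfrac{1}{j}\bigl[L_j,[\Omega(a,b),Y]\bigr]+\tfrac{1}{j}\bigl[\Omega(a,b),[L_j,Y]\bigr].$$

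For $Y=X\otimes t^m$, the first bracket on the right is zero by Proposition 3.1 (applied to $Y$ itself). For the second, the $\bar\tau$-bracket $[L_j,X\otimes t^m]=m\,X\otimes t^{m+j}$ is again in $\tilde{\mathfrak{g}}$, so Proposition 3.1 (applied now to $[L_j,Y]$) annihilates it as well. For $Y=K$ both pieces are trivially zero, since $K$ is central in $\tilde{\mathfrak{g}}$ and (using $K=C_0$ in $\tau$) also $[L_j,K]=0$. Consequently $[T_j(a,b),Y]=0$ for every generator $Y$ of $\tilde{\mathfrak{g}}$, which is exactly the claim.

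The only genuinely delicate point, and where the hypothesis $V\in\mathcal{O}$ is really used, is the well-definedness of the composed operators on $V$. The expressions $\Omega^1_{a,b}$ and $\Omega^2_{a,b}$ make $\Omega(a,b)$ a formal infinite sum; however, condition (2) in the definition of $\mathcal{O}$ guarantees that for any fixed $v\in V$ only finitely many of the terms $x_\alpha\otimes t^n(b)$ act non-trivially on $v$, so $\Omega(a,b)v$ is an honest finite sum. The same applies to $L_j\Omega(a,b)v$ and to $\Omega(a,b)(L_j v)$, since $L_j v$ is itself a vector of $V$. Therefore $[L_j,\Omega(a,b)]$ is a bona fide endomorphism of $V$ and the Jacobi identity in $\operatorname{End}(V)$ is fully justified. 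This well-definedness is the main (and essentially the only) technical obstacle; beyond it the argument is a single application of the Jacobi identity together with Proposition 3.1.
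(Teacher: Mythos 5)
Your proposal is correct and follows essentially the same route as the paper: reduce $j=0$ to Proposition 3.1 and, for $j\neq 0$, apply the Jacobi identity to $[x(k),[L_j,\Omega(a,b)]]$ so that both resulting terms vanish by Proposition 3.1 (the paper uses $[x(k),L_j]=-kx(j+k)$ exactly as you do). Your additional remarks on the well-definedness of $\Omega(a,b)$ via condition (2) of category $\mathcal{O}$ are a sensible supplement that the paper leaves implicit.
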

\begin{proof}
We can assume $j \neq 0$ as we already know for $j=0$.  Let $x \in \mathfrak{g},k \in\mathbb{Z}, 0 \neq j \in \mathbb{Z}$.\\
Consider $[x(k),T_j (a,b)]=\frac{-1}{j}[x(k), [L_j ,\Omega(a,b)]]$\\
$=\frac{1}{j}[L_{j}, [\Omega(a,b), x(k)]] +\frac{1}{j}[\Omega(a,b), [x(k),  L_{j}]]$.\\
The first term is zero by Proposition 3.1.  The second term is also zero by the same proposition by noting that $[x(k),L_j]=-kx(j+k)$.  This completes the F.
\end{proof}
We now give an expression for $T_j (a,b)$.  We need some lemmas for that. \\
\begin{lem} \rm \label{l1}
(a) $n \in \mathbb{Z},0 \neq j \in \mathbb{Z}, \sum_{\alpha \in \overset{\circ} {\Delta}}x_{-\alpha}\otimes t^{-n}(a) x_{\alpha}\otimes t^{n+j}(b)= \sum_{\alpha \in \overset{\circ} {\Delta}}x_{\alpha}\otimes t^{n+j}(b) x_{-\alpha}\otimes t^{-n}(a) $\\
(b) $\sum_{i}h_{i}\otimes t^{-n}(a) h^{i}\otimes t^{n+j}(b)=\sum_{i}h^{i}\otimes t^{n+j}(b) h_{i}\otimes t^{-n}(a)$
\end{lem}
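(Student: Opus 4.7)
The plan is to recognize both identities as the statement that a certain sum of commutators vanishes, and then to compute those commutators directly using the affine bracket from (1.1) extended via (1.4) to $\tilde{\mathfrak{g}} \otimes A$. Concretely, part (a) is equivalent to
\[ \sum_{\alpha \in \overset{\circ}{\Delta}} [x_{-\alpha}\otimes t^{-n}(a),\; x_\alpha \otimes t^{n+j}(b)] = 0, \]
and part (b) is the analogous statement for the Cartan pair $(h_i, h^i)$.

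For a single summand of (a), the affine bracket produces two pieces: a ``$\mathfrak{g}$-part'' $[x_{-\alpha}, x_\alpha]\otimes t^j(ab) = -\gamma(\alpha)\otimes t^j(ab)$ (using the normalization in (3.2)), plus a central piece proportional to $\delta_{-n+(n+j),\,0}\,K(ab) = \delta_{j,0}\,K(ab)$. The hypothesis $j \neq 0$ kills the central piece. Summing the remaining $-\gamma(\alpha)\otimes t^j(ab)$ over $\alpha \in \overset{\circ}{\Delta}$ and using that the root system is invariant under $\alpha \mapsto -\alpha$ (so $\gamma(-\alpha) = -\gamma(\alpha)$), one concludes $\sum_{\alpha \in \overset{\circ}{\Delta}} \gamma(\alpha) = 0$, and the whole commutator sum vanishes, proving (a). Part (b) is even simpler: the Cartan subalgebra is abelian so each $[h_i, h^i] = 0$, and the central piece is again killed by $j \neq 0$, so every individual commutator already vanishes; no sum-cancellation is needed.

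There is no genuine obstacle here; the lemma is essentially a bookkeeping exercise. The two structural facts driving the proof --- that the hypothesis $j \neq 0$ annihilates the $2$-cocycle contribution to the affine bracket, and that the finite root system is symmetric under negation so that the Cartan-valued residue telescopes to zero --- are precisely the features that would fail at $j = 0$, which is why that case is excluded from the statement.
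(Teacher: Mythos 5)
Your proof is correct and follows essentially the same route as the paper: both reduce identity (a) to the vanishing of $\sum_{\alpha\in\overset{\circ}{\Delta}}[x_{-\alpha}\otimes t^{-n}(a),\,x_{\alpha}\otimes t^{n+j}(b)]$, use $j\neq 0$ to kill the central (cocycle) term, and cancel the remaining Cartan-valued sum via the symmetry $\alpha\mapsto-\alpha$ of $\overset{\circ}{\Delta}$. The paper dismisses (b) as ``similar''; your observation that each commutator $[h_i,h^i]$ vanishes individually (so only the $j\neq 0$ hypothesis is actually needed there) is exactly the intended argument.
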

\begin{proof}
(\textbf{a})  we have $LHS=RHS+\sum_{\alpha \in \overset{\circ} {\Delta}}\gamma^{-1}(\alpha) \otimes t^j(ab)$, but $\sum_{\alpha \in \overset{\circ} {\Delta}}\gamma^{-1}(\alpha) \otimes t^j(ab)$ is zero as $\gamma^{-1}(-\alpha) = -\gamma^{-1}{(\alpha)}$ and hence  (a) is proved and (b) is similar
\end{proof}
\begin{lem} \rm  \label{l2}
(a) $0 \neq j \in \mathbb{Z}, \\ \sum_{n \in \mathbb{Z}}\sum_{\alpha \in \overset{\circ} {\Delta}}x_{-\alpha}\otimes t^{-n}(a) x_{\alpha}\otimes t^{n+j}(b)= \sum_{n \in \mathbb{Z}}\sum_{\alpha \in \overset{\circ} {\Delta}}x_{-\alpha}\otimes t^{-n}(b) x_{\alpha}\otimes t^{n+j}(a) $\\
(b) $0 \neq j \in \mathbb{Z}, \sum_{i}\sum_{n \in \mathbb{Z}}h_{i}\otimes t^{-n}(a) h^{i}\otimes t^{n+j}(b)=\sum_{i}\sum_{n \in \mathbb{Z}}h_{i}\otimes t^{-n}(b) h^{i}\otimes t^{n+j}(a)$
\end{lem}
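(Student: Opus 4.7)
The plan is to derive both (a) and (b) from Lemma \ref{l1} by combining a termwise commutation with a reindexing of the summation variable $n$ and the symmetry $\alpha\mapsto -\alpha$ of $\overset{\circ}{\Delta}$. Before starting, I would note that the doubly infinite sums make sense as operators on an object $V$ of category $\mathcal{O}$: for each weight vector $v\in V$, the inner factor $x_\alpha\otimes t^{n+j}(b)$ kills $v$ once $n+j$ is large enough, and then $x_{-\alpha}\otimes t^{-n}(a)$ acts on a fixed finite-dimensional weight space, so only finitely many $n$ contribute. This legitimizes all the manipulations below.

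For part (a), the first step is to apply Lemma \ref{l1}(a) termwise in $n$ (permissible because $j\neq 0$, so the two factors commute as in that lemma up to the vanishing correction already noted). This rewrites the left-hand side as
$$
\sum_{n\in\mathbb{Z}}\sum_{\alpha\in\overset{\circ}{\Delta}} x_{\alpha}\otimes t^{n+j}(b)\, x_{-\alpha}\otimes t^{-n}(a).
$$
Next I would substitute $\alpha\to -\alpha$ in the inner sum, which is a bijection of $\overset{\circ}{\Delta}$, turning $x_\alpha$ into $x_{-\alpha}$ and vice versa. Finally, I would perform the change of summation index $n'=-n-j$, so that $-n=n'+j$ and $n+j=-n'$. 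The combined effect is precisely the right-hand side $\sum_{n'}\sum_\alpha x_{-\alpha}\otimes t^{-n'}(b)\, x_\alpha\otimes t^{n'+j}(a)$.

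The argument for (b) is structurally identical: first commute via Lemma \ref{l1}(b), then use the symmetry of the canonical tensor $\sum_i h_i\otimes h^i=\sum_i h^i\otimes h_i$ in $\mathfrak{h}\otimes\mathfrak{h}$ (which follows because $\{h_i\},\{h^i\}$ are dual bases with respect to a symmetric non-degenerate form), and finally reindex $n\to -n-j$. In place of the involution $\alpha\mapsto-\alpha$ it is this basis-swap that provides the needed symmetry between the positions of $a$ and $b$.

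I do not anticipate a serious obstacle; the proof is essentially a bookkeeping identity, and the only nontrivial point is the convergence/finiteness remark at the outset. The key conceptual input, that $\sum_\alpha \gamma^{-1}(\alpha)\otimes t^j(ab)$ vanishes and that $\sum_i h_i\otimes h^i$ is symmetric, is supplied by Lemma \ref{l1}; everything else is relabeling.
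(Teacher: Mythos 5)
Your proof is correct and follows the same route as the paper: apply Lemma \ref{l1} termwise to commute the two factors, use the involution $\alpha\mapsto-\alpha$ of $\overset{\circ}{\Delta}$ (resp.\ the symmetry of the dual-basis tensor $\sum_i h_i\otimes h^i$) to restore the sign pattern, and reindex $n\mapsto -n-j$. The only addition is your opening remark on why the doubly infinite sums make sense on objects of $\mathcal{O}$, which the paper leaves implicit here.
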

\begin{proof}
proof of (a) Follows from Lemma \ref{l1}(a) by sending $n+j$ to $-n$ and noting that when $\alpha$ varies in $\overset{\circ} {\Delta}$ then so is $-\alpha$.\\
(b) The proof is similar by noting that the sum is independent of the dual basis.
\end{proof}
\begin{lem} \rm \label{l3}
For $0 \neq j \in \mathbb{Z}$, \\ let $ B = \sum_{\alpha \in \overset{\circ} {\Delta}}\sum_{n \in \mathbb{Z}}nx_{-\alpha}\otimes t^{-n}(a) x_{\alpha}\otimes t^{n+j}(b)+ \sum_{\alpha \in \overset{\circ} {\Delta}}\sum_{n \in \mathbb{Z}}nx_{-\alpha}\otimes t^{-n}(b) x_{\alpha}\otimes t^{n+j}(a)$\\
  then $ B=-j\sum_{\alpha \in \overset{\circ} {\Delta}}\sum_{n \in \mathbb{Z}}x_{-\alpha}\otimes t^{-n}(a) x_{\alpha}\otimes t^{n+j}(b) $
\end{lem}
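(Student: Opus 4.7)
The plan is to transform the second summand of $B$ so that it has the same operator-ordering and the same argument pattern (``$a$ on the left, $b$ on the right'') as the first summand, at which point the two sums can be combined term-by-term and the coefficients add to $-j$.

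Concretely, let $S_1$ denote the first sum and $S_2$ the second sum in the definition of $B$. In $S_2$ I would first perform the index substitution $n\mapsto -n-j$ (valid for $j\neq 0$ since the sum is over all of $\mathbb{Z}$). This replaces $t^{-n}$ by $t^{n+j}$, replaces $t^{n+j}$ by $t^{-n}$, and turns the coefficient $n$ into $-n-j$. Next I would apply the involution $\alpha\mapsto-\alpha$ on $\overset{\circ}{\Delta}$, which exchanges $x_\alpha$ and $x_{-\alpha}$ in the summand and leaves the index set invariant. After these two bijective relabelings $S_2$ reads
\[
S_2 \;=\; \sum_{\alpha\in\overset{\circ}{\Delta}}\sum_{n\in\mathbb{Z}} (-n-j)\; x_{\alpha}\otimes t^{n+j}(b)\; x_{-\alpha}\otimes t^{-n}(a).
\]

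Now I would invoke Lemma \ref{l1}(a) for each fixed $n$, which (because $j\neq 0$, so no central contribution appears) lets me reorder the factors in the sum over $\alpha$:
\[
\sum_{\alpha\in\overset{\circ}{\Delta}} x_{\alpha}\otimes t^{n+j}(b)\; x_{-\alpha}\otimes t^{-n}(a)
\;=\; \sum_{\alpha\in\overset{\circ}{\Delta}} x_{-\alpha}\otimes t^{-n}(a)\; x_{\alpha}\otimes t^{n+j}(b).
\]
This rewrites $S_2$ in exactly the same operator form as $S_1$, but with coefficient $-n-j$ instead of $n$. Adding $S_1$ and $S_2$ collapses the coefficients via $n+(-n-j)=-j$, which gives precisely the claimed identity.

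The only subtlety, and the only place one has to be careful, is justifying the re-indexing $n\mapsto -n-j$ and the application of Lemma \ref{l1}(a) as an identity in the (suitably completed) universal enveloping algebra acting on objects of $\mathcal{O}$; this is where the hypothesis $j\neq 0$ is essential, both to allow Lemma \ref{l1}(a) to be applied termwise without a central correction and to ensure the bijectivity of the shift on $\mathbb{Z}$. Everything else is bookkeeping.
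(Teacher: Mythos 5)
Your proof is correct, and it rests on exactly the same two ingredients as the paper's: the index shift $n\mapsto -n-j$ (which sends $t^{-n}$ to $t^{n+j}$ and the coefficient $n$ to $-n-j$) and the reordering identity of Lemma \ref{l1}(a), whose validity for $j\neq 0$ you correctly attribute to the cancellation $\gamma^{-1}(-\alpha)=-\gamma^{-1}(\alpha)$ over $\overset{\circ}{\Delta}$ with no central correction. The difference is purely organizational, and in your favor: you apply the relabeling and the $\alpha\mapsto-\alpha$ involution only to the second sum $S_2$, converting it into $\sum_{\alpha,n}(-n-j)\,x_{-\alpha}\otimes t^{-n}(a)\,x_{\alpha}\otimes t^{n+j}(b)$, so that adding $S_1$ collapses the coefficients as $n+(-n-j)=-j$ and the lemma drops out in one step. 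The paper instead applies the substitution to both summands, obtains $B=-B-2j\sum_{\alpha,n}x_{-\alpha}\otimes t^{-n}(a)\,x_{\alpha}\otimes t^{n+j}(b)$ after invoking Lemma \ref{l2}(a) to identify the two residual sums, and then solves $2B=-2j(\cdots)$. Your route bypasses Lemma \ref{l2}(a) entirely (in effect you reprove that symmetry inside your manipulation of $S_2$) and avoids the ``solve for $B$'' step; both arguments are valid as identities between locally finite operators on objects of $\mathcal{O}$, which is the caveat you rightly flag at the end.
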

\begin{proof}
By sending $-n ~to~ n+j$ in both sums of $B$ we see that\\
$B=\sum_{\alpha \in \overset{\circ} {\Delta}}\sum_{n \in \mathbb{Z}}(-n-j)x_{-\alpha}\otimes t^{n+j}(a) x_{\alpha}\otimes t^{-n}(b)+ \\ \sum_{\alpha \in \overset{\circ} {\Delta}}\sum_{n \in \mathbb{Z}}(-n-j)x_{-\alpha}\otimes t^{n+j}(b) x_{\alpha}\otimes t^{-n}(a)$\\
Then by Lemma \ref{l1}(a) we see that \\
$B=\sum_{\alpha \in \overset{\circ} {\Delta}}\sum_{n \in \mathbb{Z}}(-n-j)x_{-\alpha}\otimes t^{-n}(b) x_{\alpha}\otimes t^{n+j}(a)+\sum_{\alpha \in \overset{\circ} {\Delta}}\sum_{n \in \mathbb{Z}}(-n - j)x_{-\alpha}\otimes t^{-n}(a) x_{\alpha}\otimes t^{n+j}(b)$\\
$=-B-j\sum_{\alpha \in \overset{\circ} {\Delta}}\sum_{n \in \mathbb{Z}}x_{-\alpha}\otimes t^{-n}(b) x_{\alpha}\otimes t^{n+j}(a)-j\sum_{\alpha \in \overset{\circ} {\Delta}}\sum_{n \in \mathbb{Z}}x_{-\alpha}\otimes t^{-n}(a) x_{\alpha}\otimes t^{n+j}(b)$\\
$~~~~~$ $= -B-2j\sum_{\alpha \in \overset{\circ} {\Delta}}\sum_{n \in \mathbb{Z}}x_{-\alpha}\otimes t^{-n}(a) x_{\alpha}\otimes t^{n+j}(b)$ by Lemma \ref{l2}(a).\\
Thus $2B=-2j\sum_{\alpha \in \overset{\circ} {\Delta}}\sum_{n \in \mathbb{Z}}x_{-\alpha}\otimes t^{-n}(a) x_{\alpha}\otimes t^{n+j}(b)$\\
This completes the proof of the lemma.
\end{proof}
\begin{lem} \rm \label{l4}
(a) For $0 \neq j \in \mathbb{Z}$, $[L_j, \Omega_{a,b}^1+\Omega_{b,a}^1]=-j\sum_{\alpha \in \overset{\circ} {\Delta}}\sum_{n \in \mathbb{Z}}x_{-\alpha}\otimes t^{-n}(a) x_{\alpha}\otimes t^{n+j}(b)$;\\
(b)\,$0 \neq j \in \mathbb{Z}$, $[L_j, \Omega_{a,b}^2+\Omega_{b,a}^2]=-j\sum_{i}\sum_{n \in \mathbb{Z}}h_{i}\otimes t^{-n}(a) h^{i}\otimes t^{n+j}(b).$
\end{lem}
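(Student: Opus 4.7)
The plan is to expand $[L_j, \Omega^1_{a,b}]$ by the Leibniz rule and re-index so the answer takes the symmetric form that Lemma \ref{l3} handles. Write $Y_n(c,d) := \sum_{\alpha \in \overset{\circ}{\Delta}} x_{-\alpha}\otimes t^{-n}(c)\, x_\alpha \otimes t^{n+j}(d)$. In this notation Lemma \ref{l3} reads $\sum_{n\in\mathbb{Z}} n[Y_n(a,b)+Y_n(b,a)] = -j\sum_n Y_n(a,b)$, so it suffices to identify $[L_j, \Omega^1_{a,b}+\Omega^1_{b,a}]$ with the left-hand side here.

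Using $[L_j, X\otimes t^m(c)] = m\, X\otimes t^{m+j}(c)$ together with the Leibniz rule, I would split $[L_j, \Omega^1_{a,b}]$ into two pieces coming from the two operators in each summand:
\[
[L_j, \Omega^1_{a,b}] = \sum_{\alpha,\, n>0}\Bigl\{-n\, x_{-\alpha}\otimes t^{j-n}(a)\, x_\alpha\otimes t^{n}(b) + n\, x_{-\alpha}\otimes t^{-n}(a)\, x_\alpha\otimes t^{n+j}(b)\Bigr\}.
\]
The second summand is already $\sum_{n>0} n\, Y_n(a,b)$. For the first, the substitution $m=-n$ puts the factors into the shape $m\, x_{-\alpha}\otimes t^{m+j}(a)\, x_\alpha\otimes t^{-m}(b)$ with $m<0$. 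A single application of Lemma \ref{l1}(a) (which swaps the order of the two operators, using $j\neq 0$) together with the root-symmetry relabeling $\alpha\to -\alpha$ converts this into $\sum_{m<0} m\, Y_m(b,a)$. Thus $[L_j, \Omega^1_{a,b}] = \sum_{n>0} n\, Y_n(a,b) + \sum_{n<0} n\, Y_n(b,a)$; swapping $a\leftrightarrow b$ gives the analogous identity for $\Omega^1_{b,a}$, and summing produces exactly $\sum_{n\in\mathbb{Z}} n[Y_n(a,b)+Y_n(b,a)]$ (the $n=0$ term vanishes automatically). Lemma \ref{l3} then yields the right-hand side of (a).

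Part (b) is obtained by the identical argument with $h_i, h^i$ replacing $x_{\pm\alpha}$, using Lemmas \ref{l1}(b) and \ref{l2}(b) together with the Cartan analogue of Lemma \ref{l3}, which is proved by repeating the averaging computation of \ref{l3}. The main point requiring care, and the likely source of sign or index slips on a first attempt, is the conversion of the ``wrong position'' sum into $Y_m(b,a)$: it requires exactly one application of Lemma \ref{l1} (this is precisely where the hypothesis $j\neq 0$ is essential) followed by the relabeling $\alpha\to -\alpha$. Once this bookkeeping is in place, the identities are immediate from Lemma \ref{l3} and its Cartan analogue.
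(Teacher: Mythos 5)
Your proposal is correct and follows essentially the same route as the paper: expand $[L_j,\Omega^1_{a,b}]$ by Leibniz, re-index $n\mapsto -n$ in the ``wrong position'' sum, use Lemma \ref{l1}(a) plus the $\alpha\to-\alpha$ relabeling to recognize it as $\sum_{n<0}nY_n(b,a)$, add the $a\leftrightarrow b$ counterpart, and invoke Lemma \ref{l3}. Your explicit remark that part (b) needs a Cartan analogue of Lemma \ref{l3} (proved by the same averaging) is a fair and accurate gloss on the paper's terse ``proof is similar.''
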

\begin{proof}
Consider $[L_j, \Omega_{a,b}^1]=\sum_{\alpha \in \overset{\circ} {\Delta}}\sum_{n >0}-nx_{-\alpha}\otimes t^{-n+j}(a) x_{\alpha}\otimes t^{n}(b)+\sum_{\alpha \in \overset{\circ} {\Delta}}\sum_{n >0}nx_{-\alpha}\otimes t^{-n}(a) x_{\alpha}\otimes t^{n+j}(b).$\\
Now by replacing n by $-n$ in the first sum using Lemma \ref{l1}(a) we see that \\
$[L_j, \Omega_{a,b}^1]= \sum_{\alpha \in \overset{\circ} {\Delta}}\sum_{n<0}nx_{-\alpha}\otimes t^{-n}(b) x_{\alpha}\otimes t^{n+j}(a)+\sum_{\alpha \in \overset{\circ} {\Delta}}\sum_{n \geq 0}nx_{-\alpha}\otimes t^{-n}(a) x_{\alpha}\otimes t^{n+j}(b).$\\
Now we have
\begin{align*}[L_j, \Omega_{a,b}^1+\Omega_{b,a}^1]&=\sum_{\alpha \in \overset{\circ} {\Delta}}\sum_{n \in \mathbb{Z}}nx_{-\alpha}\otimes t^{-n}(a) x_{\alpha}\otimes t^{n+j}(b)+\sum_{\alpha \in \overset{\circ} {\Delta}}\sum_{n \in \mathbb{Z}}nx_{-\alpha}\otimes t^{-n}(b) x_{\alpha}\otimes t^{n+j}(a)\\
&= B (\rm {as~ defined~ in ~Lemma ~\ref{l3}})\\
&=-j\sum_{\alpha \in \overset{\circ} {\Delta}}\sum_{n \in \mathbb{z}}x_{-\alpha}\otimes t^{-n}(a) x_{\alpha}\otimes t^{n+j}(b) (\rm {by ~Lemma~ \ref{l3}})
\end{align*}
(b) proof is similar to (a)
\end{proof}
\begin{prop} \rm
$0 \neq j \in \mathbb{Z},\\ T_j (a,b)=\frac{-1}{j}[L_j,\Omega (a,b)]$ \\
$~~~~~~~~~~$ $=\sum_{\alpha \in \overset{\circ} {\Delta}}\sum_{n \in \mathbb{Z}}x_{-\alpha}\otimes t^{-n}(a) x_{\alpha}\otimes t^{n+j}(b)+$\\
 $~~~~~~~~~~~~~$ $\sum_{i}\sum_{n \in \mathbb{Z}}h_{i}\otimes t^{-n}(a) h^{i}\otimes t^{n+j}(b)+K(a)L_j(b)+K(b)L_j(a)+2h^{\vee}L_j(ab)$
\end{prop}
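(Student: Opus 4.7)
The plan is to compute $[L_j, \Omega(a,b)]$ by expanding $\Omega(a,b)$ term-by-term according to its definition in (3.3) and invoking Lemma \ref{l4}. I will organize the summands of $\Omega(a,b)$ into two classes: those whose underlying Lie-algebra factor sits in degree zero (i.e.\ in $\mathfrak{g}\otimes t^0 = \mathfrak{g}$ or built from $K$) and those that involve non-trivial powers of $t$.

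First I will handle the ``degree-zero'' terms. Since $[L_j, X\otimes t^0]=0\cdot X\otimes t^{j}=0$ for any $X\in\mathfrak{g}$, the brackets of $L_j$ with $\gamma^{-1}(\bar{\rho})(ab)$, with $\sum_i h_i(a)h^i(b)$, and with $\Omega^3_{a,b}+\Omega^3_{b,a}$ all vanish. The element $K$ is central in $\tau$, so $[L_j,K(a)]=0$ as well. The only non-trivial contribution of the ``closed form'' terms comes from $d=L_0$: writing $\gamma^{-1}(\rho)=\gamma^{-1}(\bar\rho)+h^{\vee}d$, I get $[L_j, 2\gamma^{-1}(\rho)(ab)] = 2h^{\vee}[L_j, L_0(ab)] = -2jh^{\vee}L_j(ab)$, and similarly $[L_j, K(a)d(b)+K(b)d(a)] = -j(K(a)L_j(b)+K(b)L_j(a))$. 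After dividing by $-j$ these yield precisely the $2h^{\vee}L_j(ab)$ and $K(a)L_j(b)+K(b)L_j(a)$ terms of the claimed formula.

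Next I will treat the ``infinite sum'' pieces $\Omega^1_{a,b}+\Omega^1_{b,a}$ and $\Omega^2_{a,b}+\Omega^2_{b,a}$, which are exactly the content of Lemma \ref{l4}(a) and (b). Applying those lemmas directly gives
\[
[L_j,\Omega^1_{a,b}+\Omega^1_{b,a}] = -j\sum_{\alpha\in\overset{\circ}{\Delta}}\sum_{n\in\mathbb{Z}} x_{-\alpha}\otimes t^{-n}(a)\,x_{\alpha}\otimes t^{n+j}(b),
\]
\[
[L_j,\Omega^2_{a,b}+\Omega^2_{b,a}] = -j\sum_{i}\sum_{n\in\mathbb{Z}} h_{i}\otimes t^{-n}(a)\,h^{i}\otimes t^{n+j}(b),
\]
so after multiplying by $-1/j$ they contribute the two remaining sums in the proposition.

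The only real subtlety is bookkeeping: making sure I account for $\gamma^{-1}(\rho)=\gamma^{-1}(\bar\rho)+h^{\vee}d$ correctly (so that the $h^\vee$ coefficient in front of $L_j(ab)$ comes out with the right sign) and noting that the semidirect product bracket $[L_n,X\otimes t^m]=mX\otimes t^{m+n}$ acts on $X\otimes t^m(c)=X\otimes t^m\otimes c$ without affecting the $A$-factor, so all $[L_j,\cdot]$ computations reduce to the usual affine/Virasoro bracket with the algebra element $c$ carried along passively. Summing the four non-vanishing contributions and dividing by $-j$ recovers the claimed closed form for $T_j(a,b)$, completing the proof.
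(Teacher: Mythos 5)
Your proposal is correct and follows essentially the same route as the paper: the paper's proof likewise reduces everything to Lemma \ref{l4} for the $\Omega^1$ and $\Omega^2$ pieces and then records the two remaining non-trivial brackets $[L_j,\gamma^{-1}(\rho)(ab)]=-h^{\vee}jL_j(ab)$ and $[L_j,\Omega^3_{a,b}+\Omega^3_{b,a}]=0$, exactly as you do. Your version merely spells out the degree-zero bookkeeping (including $[L_j,K(a)d(b)+K(b)d(a)]=-j(K(a)L_j(b)+K(b)L_j(a))$) in more detail than the paper's one-line justification.
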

\begin{proof}
Follows from Lemma \ref{l4} and by noting the following brackets:
$[L_j, \gamma^{-1}(\rho) (ab)] = -h^{\vee} j L_j (ab)$ and $[L_j, \Omega^{3}_{a, b} + \Omega^{3}_{b, a}] = 0$.
\end{proof}
We will now record the following useful bracket

\begin{prop} \rm 
$0 \neq j \in \mathbb{Z}, k \in \mathbb{Z}, [L_k,T_j (a,b)]=(j-k)T_{j+k}(a,b)-\delta_{j+k,0}\frac{k^3-k}{6}(dim \mathfrak{g})K(ab) + \delta_{j +k, 0} \frac{k^3-k}{12}(2 K(a)K(b) + 2 h^{\vee} K(ab)).$
\end{prop}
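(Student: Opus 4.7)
The plan is to compute $[L_k, T_j(a,b)]$ by combining the definition $T_j(a,b) = -\frac{1}{j}[L_j, \Omega(a,b)]$ with the Jacobi identity and the explicit formula for $T_j(a,b)$ from the preceding proposition. First I would apply Jacobi to obtain
\[
[L_k, T_j(a,b)] = -\frac{1}{j}\bigl\{[[L_k,L_j],\Omega(a,b)] + [L_j,[L_k,\Omega(a,b)]]\bigr\},
\]
then use $[L_k,L_j] = (j-k)L_{k+j} + \delta_{k+j,0}\frac{k^3-k}{12}K$ and the centrality of $K$ in $\tau(A)$ to reduce the first summand to $(j-k)[L_{k+j},\Omega(a,b)]$. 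The case $k=0$ is immediate once one notes $[L_0, \Omega(a,b)] = 0$ (each summand of $\Omega(a,b)$ is $L_0$-homogeneous of weight zero), which produces $[L_0, T_j(a,b)] = jT_j(a,b)$, matching the formula.

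For $k \ne 0$ I would compute $[L_k, T_j(a,b)]$ directly from the explicit formula
\[
T_j(a,b) = \sum_{\alpha,n} x_{-\alpha}(t^{-n}a)x_\alpha(t^{n+j}b) + \sum_{i,n} h_i(t^{-n}a)h^i(t^{n+j}b) + K(a)L_j(b) + K(b)L_j(a) + 2h^\vee L_j(ab),
\]
of the preceding proposition rather than attempting to close the Jacobi recursion (which, after substituting $[L_m,\Omega] = -m T_m$ and its transpose in $(k,j)$, reduces to a tautology). Applying $[L_k, X\otimes t^m] = mX\otimes t^{m+k}$ via the Leibniz rule and $[L_k, L_j(x)] = (j-k)L_{k+j}(x) + \delta_{k+j,0}\frac{k^3-k}{12}K(x)$ on the Virasoro-linear summands, then shifting the summation index $n \mapsto n+k$ in one of the two pieces of each quadratic sum, assembles everything into $(j-k)$ times the same-shape expression with $j$ replaced by $k+j$, plus the cocycle $\delta_{k+j,0}\frac{k^3-k}{12}[2K(a)K(b) + 2h^\vee K(ab)]$. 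When $k+j \ne 0$ this $(j-k)$-multiple is exactly the preceding proposition's formula for $T_{k+j}(a,b)$, giving $[L_k, T_j(a,b)] = (j-k)T_{k+j}(a,b)$ with the delta-terms vanishing, as claimed.

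The main obstacle is the resonant case $k+j = 0$, in which the level-zero expression $E_0$ that emerges from the reassembly is not literally $T_0(a,b) = \Omega(a,b)$: the explicit $T_j$-formula of the preceding proposition is stated only for $j \ne 0$, while $\Omega(a,b)$ is written in the normal-ordered form of (3.3). I would identify $(j-k)E_0$ with $(j-k)\Omega(a,b)$ plus the scalar correction by splitting each $\sum_{n \in \mathbb{Z}}$ appearing in $E_0$ into $n > 0$, $n = 0$, and $n < 0$ pieces. The $n > 0$ pieces directly give $\Omega^1_{a,b} + \Omega^2_{a,b}$; the $n = 0$ piece, combined with $\sum_i h_i(a)h^i(b)$ via $\sum_\alpha \gamma^{-1}(\alpha) = 0$ and $\sum_{\beta \in \overset{\circ}{\Delta}^+}\gamma^{-1}(\beta) = 2\gamma^{-1}(\bar\rho)$, gives $\sum_i h_i(a)h^i(b) + \Omega^3_{a,b} + \Omega^3_{b,a} + 2\gamma^{-1}(\bar\rho)(ab)$; and the $n < 0$ piece, after $n \mapsto -m$ and $\alpha \mapsto -\alpha$ and normal-ordering via the residues $[x_{-\alpha}(t^m a), x_\alpha(t^{-m}b)] = -\gamma^{-1}(\alpha)(ab) + mK(ab)$ and $[h_i(t^m a), h^i(t^{-m}b)] = m(h_i,h^i)K(ab)$, gives $\Omega^1_{b,a} + \Omega^2_{b,a}$ together with scalar residues summing over $\alpha$ and $i$ to $m(\dim\mathfrak{g})K(ab)$ for each $m > 0$. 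Tracked consistently with the $n \mapsto n+k$ shift of the previous paragraph and evaluated on a vector of a category-$\mathcal{O}$ module (where all sums are finite), these residues sum to $\frac{j^2-1}{12}(\dim\mathfrak{g})K(ab)$; multiplying by the $(j-k) = 2j$ prefactor reproduces the $-\delta_{k+j,0}\frac{k^3-k}{6}(\dim\mathfrak{g})K(ab)$ term, which together with the Virasoro cocycle from the preceding paragraph completes the identity.
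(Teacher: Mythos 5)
Your overall strategy coincides with the paper's: prove the identity by a direct computation on the explicit formula for $T_j(a,b)$, shifting summation indices and normal-ordering, with all the real content concentrated in the resonant case $j+k=0$ where reordering the quadratic sums produces central terms. Your preliminary observations are correct and are small additions to what the paper records: the pure Jacobi-identity route is indeed circular, the case $k=0$ follows from $[L_0,\Omega(a,b)]=0$, and for $j+k\neq 0$ the index shift $n\mapsto n+k$ together with $[L_k,L_j(x)]=(j-k)L_{j+k}(x)+\delta_{j+k,0}\frac{k^3-k}{12}K(x)$ gives $(j-k)T_{j+k}(a,b)$ plus the Virasoro cocycle term, exactly as in the paper (which dismisses this case as ``earlier arguments'').

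The one soft spot is the resonant bookkeeping, which is precisely the part the paper works out in detail. Your intermediate object $E_0$ (the level-zero double sum over all $n\in\mathbb{Z}$ without normal ordering) is not a well-defined operator on objects of $\mathcal{O}$, and the naive identification $E_0=\Omega(a,b)+(\text{scalar})$ involves the divergent sum $\sum_{m>0}m\,(\dim\mathfrak{g})K(ab)$; in particular the central correction does not arise as a single residue multiplied by the uniform prefactor $(j-k)=2j$. What actually happens --- and what the paper does --- is that each $\sum_{n\in\mathbb{Z}}$ is split at $n=0$ and normal-ordered \emph{before} applying $L_k$, so that after the index shifts all reorderings are confined to the finite windows $-k\le n<0$ and $0<n\le -j$, and the central contributions enter with the $n$-dependent weights $(n+k)$ and $(n+j)$; the resulting sum $\sum_{0<n\le -j}(n+j)\,n=\frac{j^3-j}{6}$ per root and per Cartan direction yields $\frac{j^3-j}{6}(\dim\mathfrak{g})K(ab)=-\frac{k^3-k}{6}(\dim\mathfrak{g})K(ab)$ in total. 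Your asserted value $2j\cdot\frac{j^2-1}{12}(\dim\mathfrak{g})K(ab)$ agrees numerically with this, so your final constant is right, but the sentence ``these residues sum to $\frac{j^2-1}{12}(\dim\mathfrak{g})K(ab)$'' is reverse-engineered rather than derived; to close the argument you need the finite-window computation just described rather than a regularization of $E_0-\Omega(a,b)$.
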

\begin{proof}
For $j+k \neq 0$, the Proposition follows easily by earlier arguments.\\
Let $j+k=0$ and assume that $k > 0$ write $\sum_{\alpha \in \overset{\circ} {\Delta}}\sum_{n \in \mathbb{Z}}x_{-\alpha}\otimes t^{-n}(a) x_{\alpha}\otimes t^{n+j}(b)$\\
as $\sum_{\alpha \in \overset{\circ} {\Delta}}\sum_{n \geq 0}x_{-\alpha}\otimes t^{-n}(a) x_{\alpha}\otimes t^{n+j}(b)+ \sum_{\alpha \in \overset{\circ} {\Delta}}\sum_{n < 0}x_{\alpha}\otimes t^{n+j}(b) x_{-\alpha}\otimes t^{-n}(a)$\\
change n by $-n$ in the second sum and apply $L_k$.\\
Thus
$[L_k, \sum_{\alpha \in \overset{\circ} {\Delta}}\sum_{n \in \mathbb{Z}}x_{-\alpha}\otimes t^{-n}(a) x_{\alpha}\otimes t^{n+j}(b)]$\\
$=[L_k, \sum_{\alpha \in \overset{\circ} {\Delta}}\sum_{n \geq 0}x_{-\alpha}\otimes t^{-n}(a) x_{\alpha}\otimes t^{n+j}(b)+[L_k, \sum_{\alpha \in \overset{\circ} {\Delta}}\sum_{n>0}x_{\alpha}\otimes t^{-n+j}(b) x_{-\alpha}\otimes t^{n}(a)]$\\
$=\sum_{\alpha \in \overset{\circ} {\Delta}}\sum_{n \geq 0}(-n)x_{-\alpha}\otimes t^{-n+k}(a) x_{\alpha}\otimes t^{n+j}(b)+$\\
$~~~~$ $\sum_{\alpha \in \overset{\circ} {\Delta}}\sum_{n \geq 0}(n+j)x_{-\alpha}\otimes t^{-n}(a) x_{\alpha}\otimes t^{n+j+k}(b)+$\\
$~~~~$ $\sum_{\alpha \in \overset{\circ} {\Delta}}\sum_{n > 0}(-n+j)x_{\alpha}\otimes t^{-n+j+k}(b) x_{-\alpha}\otimes t^{n}(a)+$\\
$~~~~$ $\sum_{\alpha \in \overset{\circ} {\Delta}}\sum_{n> 0} nx_{\alpha}\otimes t^{-n+j}(b) x_{-\alpha}\otimes t^{n+k}(a)$\\
In the first sum replace $-n+k$ by $-n$, in the fourth sum replace $-n+j$ by $-n$ then we have\\
$=\sum_{\alpha \in \overset{\circ} {\Delta}}\sum_{n+k \geq 0}-(n+k)x_{-\alpha}\otimes t^{-n}(a) x_{\alpha}\otimes t^{n+j+k}(b)+$\\
$~~~~$ $\sum_{\alpha \in \overset{\circ} {\Delta}}\sum_{n \geq 0}(n+j)x_{-\alpha}\otimes t^{-n}(a) x_{\alpha}\otimes t^{n+j+k}(b)+$\\
$~~~~$ $\sum_{\alpha \in \overset{\circ} {\Delta}}\sum_{n> 0}(-n+j)x_{-\alpha}\otimes t^{-n+j+k}(b) x_{\alpha}\otimes t^{n}(a)+$\\
$~~~~$ $\sum_{\alpha \in \overset{\circ} {\Delta}}\sum_{n+j>0}(n+j)x_{-\alpha}\otimes t^{-n}(b) x_{\alpha}\otimes t^{n+j+k}(a),$\\
use the fact that $j+k=0$\\
 $=\sum_{\alpha \in \overset{\circ} {\Delta}}\sum_{n \geq 0}-(n+k)x_{-\alpha}\otimes t^{-n}(a) x_{\alpha}\otimes t^{n}(b)+$\\
$~~~~$ $\sum_{\alpha \in \overset{\circ} {\Delta}}\sum_{-k \leq n < 0}-(n+k)x_{-\alpha}\otimes t^{-n}(a) x_{\alpha}\otimes t^{n}(b)+$\\
$~~~~$ $\sum_{\alpha \in \overset{\circ} {\Delta}}\sum_{n \geq 0}(n+j)x_{-\alpha}\otimes t^{-n}(a) x_{\alpha}\otimes t^{n}(b)+$\\
$~~~~$ $\sum_{\alpha \in \overset{\circ} {\Delta}}\sum_{n>0}(-n+j)x_{\alpha}\otimes t^{-n}(b) x_{-\alpha}\otimes t^{n}(a)+$\\
$~~~~$ $\sum_{\alpha \in \overset{\circ} {\Delta}}\sum_{n>0}(n+j)x_{\alpha}\otimes t^{-n}(b) x_{-\alpha}\otimes t^{n}(a)-$\\
$~~~~$ $\sum_{\alpha \in \overset{\circ} {\Delta}}\sum_{0 < n\leq -j}(n+j)x_{\alpha}\otimes t^{-n}(b) x_{-\alpha}\otimes t^{n}(a)$\\
$=2j\sum_{\alpha \in \overset{\circ} {\Delta}}\sum_{n \geq 0}x_{-\alpha}\otimes t^{-n}(a) x_{\alpha}\otimes t^{n}(b)+$\\
$~~~~$ $2j\sum_{\alpha \in \overset{\circ} {\Delta}}\sum_{n > 0}x_{\alpha}\otimes t^{-n}(b) x_{-\alpha}\otimes t^{n}(a)+$\\
$~~~~$ $\sum_{\alpha \in \overset{\circ} {\Delta}}\sum_{-k \leq n<0}-(n+k)x_{-\alpha}\otimes t^{-n}(a) x_{\alpha}\otimes t^{n}(b)+$\\
$~~~~$ $\sum_{\alpha \in \overset{\circ} {\Delta}}\sum_{0 \lneqq n \leq -j}-(n+j)x_{\alpha}\otimes t^{-n}(b) x_{-\alpha}\otimes t^{n}(a )$\\
$~~~~$ $= 2j(\Omega_{a,b}^1+\Omega_{b,a}^1)+2j \sum_{\alpha \in \overset{\circ} {\Delta}}x_{-\alpha}(a)x_{\alpha}(b)+$\\
$~~~~$ $\sum_{\alpha \in \overset{\circ} {\Delta}}\sum_{0 < n \leq -j}(n+j)x_{-\alpha}\otimes t^{n}(a) x_{\alpha}\otimes t^{-n}(b)+$\\
$~~~~$ $\sum_{\alpha \in \overset{\circ} {\Delta}}\sum_{0 <n\leq -j}-(n+j)x_{\alpha}\otimes t^{-n}(b) x_{-\alpha}\otimes t^{n}(a)$\\
$=2j(\Omega_{a,b}^1+\Omega_{b,a}^1)+2j(\Omega_{a,b}^3+\Omega_{b,a}^3)+2j(2\gamma^{-1}(\bar{\rho})(ab))+ $\\
$~~~~$ $\sum_{\alpha \in \overset{\circ} {\Delta}}\sum_{0< n\leq -j}(n+j)[x_{-\alpha}\otimes t^{n}(a), x_{\alpha}\otimes t^{-n}(b)]$\\
But $[x_{-\alpha}\otimes t^{n}(a), x_{\alpha}\otimes t^{-n}(b)]= -\gamma^{-1}(\alpha)+nK(ab)$\\
So the above sum $=2j(\Omega_{a,b}^1+\Omega_{b,a}^1+\Omega_{a,b}^3+\Omega_{b,a}^3)+2j(2\gamma^{-1}(\bar{\rho})(ab))+$\\
$~~~~$ $(dim~\mathfrak{g}-l)(\frac{j^3-j}{6})K(ab)$, where $ l = \mathrm{dim}\mathfrak{h}$.\\
For $k>0, k+j=0, [L_k, \sum_{n \in \mathbb{Z}}h_i \otimes t^{-n}(a)h^i \otimes t^{n+j}(b)]=2j(\Omega_{a,b}^2+\Omega_{b,a}^2)+l\frac{j^3-j}{6}K(ab),$\\
$[L_k, K(a)L_j (b) + K(b)L_j (a) + 2 h^{\vee}L_{j}(ab)] = 2j K(a) d(b) + 2j K(b)d(a) + 4j h^{\vee} d(ab) + \delta_{j +k, 0} \frac{k^3-k}{12}(2 K(a)K(b) + 2 h^{\vee} K(ab))$, where $d = L_0$.
So $[L_k, T_{-k}(a,b)]= 2j \Omega(a,b) + \frac{(j^3 - j)}{6}K(ab)\,\, \mathrm{dim}\,\,\mathfrak{g} + \delta_{j +k, 0} \frac{k^3-k}{12}(2 K(a)K(b) + 2 h^{\vee} K(ab))$ (see above 3.3).
\end{proof}
\begin{rmk}\rm
In this remark we explain an application of Proposition 3.1. Suppose $V(\psi)$ is an irreducible integrable highest weight module for $\tau(A)$. Then $V(\psi)$ is completely reducible module for $\bar{\mathfrak{g}}$. See Kac [11]. In fact $V(\psi)$ is sum of highest weight modules for $\bar{\mathfrak{g}}$.
One of the interesting questions is to to find multiplicities of these highest weights. Suppose $w \in V(\psi)$ is a highest weight vector, then $T_{j}(a,b)$ is also highest weight vector for $\bar{\mathfrak{g}}$  which follows from Theorem 3.2( for all $j$ and for all $a,b \in A$). In particular $T_{j}(a,b) v$ where $v$ is the genrator of $V(\psi)$ as $\tau(A)$ module for all $a,b \in A$ and for all $j$
is an highest weight vector for $\bar{\mathfrak{g}}$. They will not exhaust all the highest weight vectors of $\bar{\mathfrak{g}}$ as weights of all these vectors look like $\psi - n\delta$ .
\end{rmk}

\begin{example} \rm
We end this paper by giving an example where we apply  our operators.  Take $\mathfrak{g}=\mathfrak{sl}_2$ and $A=\mathbb{C}[t,t^{-1}]$. Let ${X,Y,h}$ be a $\mathfrak{sl}_2$ copy where $[X,Y]=h,[h,X]=2X,[h,Y]=-2Y$.  We take non-degenerate bilinear form on  $\mathfrak{g}$ in the following way $(X,Y)=1$ and $(h,h)=2$ and the rest is zero.   ${\frac{h}{2}},{h}$ is a dual basis for $\mathbb{C}h$.  Let $\psi_1,\psi_2 \in \bar{\mathfrak{h}}^{*}$ and let $V(\psi_1)$ and $V(\psi_2)$ be irreducible highest weight modules for $\tau = Vir\rtimes\tilde{\mathfrak{g}}$.  Let $z_1,z_2$ be non zero distinct complex numbers.  Let $M_i=(t-z_i)$ be the maximal ideal generated by $t-z_i$ inside $A$.  Then $V(\psi_1)\otimes V(\psi_2)$ be the evaluation module for $\tau(A)$ defined by $X \otimes t^m.w _1\otimes w_2=z_{1}^m( X.w_1\otimes w_2)+z_{2}^m (w_1 \otimes X.w_2)$ where $w_i \in V(\psi_i),X \in \tau, m \in \mathbb{Z}$.  From (1.7) we have a surjective map $\tau \otimes A \rightarrow \tau \bigoplus \tau$ where kernal is $\tau \otimes M_1 \cap M_2$.  Clearly $V(\psi_1) \otimes V(\psi_2)$ is an irreducible module for $\tau \otimes A$ from above map. 
Let $P_1(t)=\frac{t-z_2}{z_1-z_2},P_2(t)=\frac{t-z_1}{z_2-z_1}$ these polynomials are very special in the sense $P_i(z_j)=\delta_{ij}$.  $X \otimes P_1(t)P_2(t) $ is zero on $V(\psi_1) \otimes V(\psi_2)$  as $P_1(t)P_2(t) \in M_1 \cap M_2$.
For example $X \otimes P_1(t)$ acts only on the first component of $V(\psi_1) \otimes V(\psi_2)$ see [5] for more general setup.  Let $\psi_i(h)=\lambda_i$ and $\psi_{i}(K)=c_i$ then it is straightforward calculation to see the following.   Here $v_i$ is the highest weight vector of $V(\psi_i)$.\\
$T_{-1}(P_1(t), P_2(t))v_1 \otimes v_2 = Yv_1 \otimes (X\otimes t^{-1})v_2+(X \otimes t^{-1})v_2 \otimes Y v_2+ \frac{\lambda_1}{2} v_1 \otimes (h \otimes t^{-1})v_2+ 
\frac{\lambda_2}{2}(h \otimes t^{-1})v_1 \otimes v_2+c_1v_1 \otimes d_{-1}v_2+c_{2}d_{-1}v_1 \otimes v_2$;\\
 $T_{-2}(P_1(t), P_2(t))v_1 \otimes v_2=Yv_1 \otimes (X \otimes t^{-2})v_2+(X \otimes t^{-2})v_1 \otimes Y v_2+ (Y \otimes t^{-1})v_1 \otimes (X \otimes t^{-1}) v_2 + 
 (X \otimes t^{-1})v_1 \otimes (Y \otimes t^{-1}) v_2+\frac{\lambda_1}{2} v_1 \otimes (h \otimes t^{-2})v_2+ \frac{\lambda_2}{2}(h \otimes t^{-2})v_1 \otimes v_1+ \frac{1}{2}(h \otimes t^{-1})v_1 \otimes (h \otimes t^{-1})v_2+ c_{1}v_{1} \otimes d_{-2}v_{2}+c_{2}d_{-2}v_1 \otimes v_2$.\\
One can also directly verify that the above vectors are highest weight vector for $\tilde{\mathfrak{g}}$ .  It is suffice to check for $X$ and $Y(1)$ as they generate $N^{+}$.
\end{example}

{\bf{Acknowledgments :}} I would like to thank Sachin S. Sharma and Ravi Teja for helping with the manuscript.

\end{document}